\documentclass{amsart}
\usepackage{amsmath}
\usepackage{amsfonts}
\usepackage{graphicx, subfigure}
\usepackage[english]{babel}
\usepackage{amsmath,amssymb}
\usepackage{amsthm}
\usepackage[usenames,dvipsnames]{color}
\usepackage{accents}
\usepackage{xcolor}
\usepackage{mathtools}
\usepackage{multicol}
\usepackage{comment}
\usepackage{caption}
\usepackage[inline]{enumitem}
\usepackage{longtable}
\usepackage{overpic}

\usepackage[
    backend=biber,
    style=numeric,
    sorting=none,
    url=false
]{biblatex}
\addbibresource{bibliography.bib}
\usepackage{csquotes}

\allowdisplaybreaks

\usepackage{standalone}
\usepackage{tikz}
\usetikzlibrary{shadows}
\usetikzlibrary{arrows}
\usetikzlibrary{arrows.spaced}
\usetikzlibrary{arrows.meta}
\usetikzlibrary{shapes.geometric}
\usetikzlibrary{fit}
\usetikzlibrary{backgrounds}
\usetikzlibrary{calc}

\definecolor{blue}{RGB}{0,32,91}
\definecolor{cyan}{RGB}{24,176,226}
\definecolor{lightgrey}{RGB}{199,201,199}
\definecolor{grey}{RGB}{85,85,85}
\definecolor{red}{RGB}{215,51,103}
\definecolor{skyblue}{RGB}{0,127,185}
\definecolor{green}{RGB}{164,96,36}
\definecolor{orange}{RGB}{242,149,18}
\definecolor{purple}{RGB}{169,57,131}

\usepackage[pdftex,hidelinks]{hyperref}
\usepackage[capitalise,nameinlink]{cleveref}

\newcommand{\la}{\lambda}

\DeclareMathOperator{\Fix}{Fix}

\theoremstyle{plain}
\newtheorem{thr}{Theorem}[section]
\Crefname{thr}{Theorem}{Theorems}

\newtheorem{lem}[thr]{Lemma}

\newtheorem{cor}[thr]{Corollary}
\Crefname{cor}{Corollary}{Corollaries}

\theoremstyle{definition}

\crefname{defi}{definition}{definitions}
\newtheorem{ex}[thr]{Example}
\theoremstyle{remark}
\newtheorem{remk}{Remark}
\theoremstyle{remark}
\crefname{remk}{remark}{remarks}

\newcommand{\field}[1]{\mathbb{#1}}
\newcommand{\R}{\field{R}}
\newcommand{\N}{\field{N}}

\newcommand{\VV}{\mathcal{V}}
\newcommand{\HH}{\mathcal{E}}
\newcommand{\ord}[2]{#1^{(#2)}}
\newcommand{\HHo}[1]{\ord{\mathcal{E}}{#1}}
\newcommand{\he}[2]{(\{#1\}, \{#2\})}
\newcommand{\uhe}[1]{\{#1\}}
\newcommand{\go}[1]{\ord{G}{#1}}

\newcommand{\goe}[1]{\go{#1}_e}
\newcommand{\ao}[1]{\ord{a}{#1}}

\newcommand{\x}{\mathbf{x}}
\newcommand{\FF}{\mathbf{F}}

\newcommand{\sset}[1]{\left\{#1\right\}}
\newcommand{\sm}{\smallsetminus}

\newcommand{\Gc}{\mathcal{G}}
\newcommand{\Hc}{\mathcal{H}}
\newcommand{\Vc}{\mathcal{V}}
\newcommand{\Ec}{\mathcal{E}}
\newcommand{\Xbb}{\mathbb{X}}

\newcommand{\Rd}{\R^d}
\newcommand{\Pf}{\mathfrak{P}}
\newcommand{\abs}[1]{\left|#1\right|}

\numberwithin{equation}{section}

\newcommand{\tr}{\mathsf{T}}

\title[Heteroclinic Dynamics and Higher-Order Interactions]{Heteroclinic Dynamics in Network Dynamical Systems with Higher-Order Interactions}
\date{\today}

\author{Christian Bick}
\address[C.~Bick]{Department of Mathematics, Vrije Universiteit Amsterdam, De Boelelaan~1111, 1081 HV Amsterdam, The Netherlands}
\address{Department of Mathematics, University of Exeter, Exeter EX4 4QF, United Kingdom}
\address{Mathematical Institute, University of Oxford, Oxford OX2 6GG, United Kingdom}
\address{Institute for Advanced Study, Technical University of Munich, Lichtenbergstr 2, 85748 Garching, Germany}
\email{c.bick@vu.nl}

\author{Sören von der Gracht}
\address[S.~von~der~Gracht]{Institute of Mathematics, Paderborn University, Warburger Str.~100, 33098 Paderborn, Germany}
\email{soeren.von.der.gracht@uni-paderborn.de}
\date{\today}

\excludecomment{percent}

\begin{document}

\maketitle

\begin{abstract}
Heteroclinic structures organize global features of dynamical systems. 
We analyze whether heteroclinic structures can arise in network dynamics with higher-order interactions which describe the nonlinear interactions between three or more units.
We find that while commonly analyzed model equations such as network dynamics on undirected hypergraphs may be useful to describe local dynamics such as cluster synchronization, they give rise to obstructions that allow to design heteroclinic structures in phase space.
By contrast, directed hypergraphs break the homogeneity and lead to vector fields that support heteroclinic structures.
\end{abstract}

\section{Introduction}
\label{Sec:Intro}
\noindent
Networks of interacting dynamical units have been extremely successful to describe emergent collective dynamics---such as synchronization---seen in many real-world systems~\cite{Pikovsky2003,Strogatz2004}: The state of node $k\in\sset{1, \dotsc, N}$ is given by $x_k\in\Rd$ and evolves according to the network interactions.
If the interactions take place between pairs of nodes, then a graph~$\Gc=(\Vc, \Ec)$ is the traditional combinatorial object that captures the interaction structure, where each unit corresponds to a vertex $v\in\Vc$ and the (additive) pairwise interactions take place along edges $e\in\Ec$. 
However, recent work has highlighted the importance of ``higher-order'' nonadditive interactions between three or more units~\cite{Battiston2020,Bick2021,Battiston2021}: 
In analogy to dynamical systems on graphs, nonadditive interactions have been associated with hyperedges $e\in\Ec$ in a hypergraph~$\Hc = (\Vc,\Ec)$.
While numerous definitions of network dynamical systems on hypergraphs (whether undirected, directed, weighted, etc.) have appeared in the literature, this approach allows to link the associated network structure (a hypergraph or simplicial complex) to dynamical features (such as synchronization behavior).
Indeed, local dynamical features such as the existence and stability of (cluster) synchronization can be phrased in terms of the higher-order interaction structure~\cite{Mulas2020,Salova2021a,Salova.2021b,Bick2020,Aguiar2020,Gallo2022,Nijholt.2022c,vonderGracht.2023}.

\newcommand{\Wu}{W^{\mathrm{u}}}
\newcommand{\Ws}{W^{\mathrm{s}}}

At the same time, to understand the dynamics of real-world systems it is essential to go beyond local dynamics and linear stability and understand global features of the network dynamics. 
Heteroclinic structures in phase space consist (in the simplest case) of equilibria~$\xi_q$ together with heteroclinic trajectories~$\gamma_{p,q}$ that lie in the intersection of the unstable manifold of~$\xi_p$ and the stable manifold of~$\xi_q$. They have received particular attention since they can organize periodic or chaotic dynamics of closed vector fields~\cite{Weinberger.2018}. 
Moreover, they have been associated with dynamics that show metastability, for example in neuroscience, where one observes discrete states (represented by~$\xi_q$) and transitions between them (along the heteroclinic connections)~\cite{Rabinovich2006,Ashwin2024}.
Indeed, given a heteroclinic structure (a set of equilibria with directed connections between pairs of equilibria) there are different ways to construct dynamical systems whose phase space has the desired heteroclinic structure~\cite{Ashwin2013,Aguiar.2011,Field.2015,Field2017}.
How these constructions are affected by considering vector fields that reflect specific network interactions has not been systematically investigated: Field remarked that heteroclinic structures can be realized in networks with pairwise interactions if the inputs are sufficiently heterogeneous (i.e., they need to be distinguishable)~\cite{Field.2015}.

Here, we analyze how higher-order interactions affect the emergence of heteroclinic structures in phase space.
Commonly considered network dynamics on hypergraphs naturally come with homogeneity assumptions on the coupling, which in turn may affect the emergence of heteroclinic dynamics: 
First, if hyperedges are sets then the order of the inputs should not matter and the corresponding coupling function needs to be symmetric in the arguments.
Moreover, higher-order interaction networks are typically considered on undirected hypergraphs, which impose even more constraints than directed hypergraphs as for each edge all units are affected in the same way by all other units that are contained in the edge.
Second, one typically considers hypergraphs with hyperedges of a single type; this constrains the coupling functions.

We systematically analyze how different types of interactions yield obstructions to constructing heteroclinic structures in phase space. 
Specifically, we consider heteroclinic structures in Lotka--Volterra type dynamical systems, which includes the classical Guckenheimer--Holmes example~\cite{Guckenheimer.1988}, and the construction of heteroclinic structures in network dynamical systems by Field~\cite{Field.2015} subject to constraints imposed by the network structure.
For example, network dynamics on undirected hypergraphs are typically too symmetric for Field's construction to apply (\Cref{thr:field-undirected}). 
By contrast, both---the Guckenheimer--Holmes cycle as well as Field's construction---can be realized in for network dynamics on directed hypergraphs. 
Interestingly though, additional restriction to specific types of interactions or coupling functions may again lead to the obstruction of one or the other construction. 
For example, $m$-uniform hypergraphs support the Guckenheimer--Holmes cycle but not Field's construction. On the other hand, coupling that does not explicitly depend on the state of the node makes the emergence of the Guckenheimer--Holmes cycle impossible but does not obstruct Field's construction---see \cref{sec:prelim} below for details on these types. 
Note that for all heteroclinic structures in consideration, we prove (or disprove) the existence of heteroclinic connections.
Here we focus on these fundamental existence properties rather than whether multiple connecting trajectories occur---i.e., the cleanliness of the heteroclinic structure~\cite{Field2017}---nor the stability properties the structures may have (see for example~\cite{Podvigina2011} and references therein).

This paper is organized as follows: In \Cref{sec:prelim}, we provide necessary preliminaries. In particular, \Cref{sec:HetCycles} contains a brief overview over heteroclinic structures as well as the Guckenheimer--Holmes cycle and Field's construction, while \Cref{Sec:Setup} introduces hypergraphs and the class of dynamics on those hypergraphs that we investigate in this article. In \Cref{Sec:Undirected}, we observe that undirected hypergraphs support neither of the two constructions. \Cref{sec:gh-directed,sec:oscar-directed} contain detailed expositions that the contrary is true for directed hypergraphs for both constructions respectively. In \cref{sec:LargerN}, we comment on the realizability of the two constructions in hypergraphs with more than three vertices. In \Cref{sec:HOI}, we classify several hypergraph dynamics from the literature according to the types investigated before to clarify which of those support heteroclinic dynamics according to the constructions. We conclude in \Cref{sec:Discussion} with a brief discussion and outlook.

\newcommand{\HC}[2]{[#1\to#2]}
\newcommand{\Cyc}{\mathsf{C}}

\section{Heteroclinic Cycles and Higher-Order Interactions}
\label{sec:prelim}
\noindent
We now introduce heteroclinic dynamics on the one hand and network dynamical systems with higher-order interactions on the other. This will set the stage for the rest of the manuscript since showing what type of heteroclinic dynamics we may expect in network dynamical systems with higher-order interactions is the main topic of this paper.

\subsection{Robust Heteroclinic Cycles}
\label{sec:HetCycles}

Heteroclinic trajectories arise when the stable and unstable manifolds of distinct equilibria intersect. 
For the dynamical system $\dot x= f(x)$ on~$\R^n$ let~$\alpha(x)$, $\omega(x)$ be the usual~$\alpha$ and $\omega$~limit sets for the flow generated by~$f$ as $t\to\pm\infty$~\cite{Katok1995}. 
For a hyperbolic equilibrium $\xi\in\R^n$ we define
\begin{align*}
\Ws(\xi) &:= \{x\in\R^n:\omega(x)=\xi\}, & \Wu(\xi) &:= \{x\in\R^n:\alpha(x)=\xi\}
\end{align*}
to be its stable and unstable manifold, respectively.
A \emph{heteroclinic cycle~$\Cyc$} now consists of a finite number of hyperbolic equilibria~$\xi_q\in\R^n$, $q=1,\dotsc,Q$, together with heteroclinic trajectories
\[\HC{\xi_q}{\xi_{q+1}} \subset \Wu(\xi_q)\cap \Ws(\xi_{q+1})\neq\emptyset,\]
where indices are taken modulo~$Q$.
See~\cite{Weinberger.2018} for a recent overview of heteroclinic dynamics including heteroclinic cycles between more general invariant sets and larger heteroclinic structures that contain more than one distinct cycle.\footnote{Such heteroclinic structures are typically called \emph{heteroclinic networks}. To avoid confusion with network dynamical systems that determine the class of vector fields we consider, we avoid the term heteroclinic network and talk about heteroclinic structures instead.}

Heteroclinic cycles do not persist under generic perturbations of the vector field. Hence, one often considers heteroclinic cycles such that the heteroclinic trajectories $\HC{\xi_p}{\xi_q}$ are contained in dynamically invariant subspaces; these heteroclinic cycles are then \emph{robust} with respect to perturbations that preserve the invariant subspaces.

An important class of such dynamical systems is given by vector fields that are equivariant with respect to a symmetry group. Let $\dot x = f(x)$ with vector field $f:\R^n\to\R^n$ determine the dynamics and suppose that a group~$\Gamma$ acts on~$\R^n$. 
If the vector field commutes with the action of~$\Gamma$, that is, $\gamma f = f\gamma$, then~$f$ is \emph{$\Gamma$-equivariant} and~$\Gamma$ are symmetries of the dynamical system; see~\cite{Golubitsky2002} for an introduction to equivariant dynamical systems. 
As a consequence, for any subgroup $\Gamma'\subset\Gamma$ the fixed point subspace $\Fix(\Gamma')=\{x\in\R^n: \gamma x=x \text{ for all }\gamma\in\Gamma'\}$ is dynamically invariant. If all heteroclinic connections of a cycle are now contained in fixed point subspaces of the symmetry action, then the heteroclinic cycle is robust with respect to perturbations to the vector field~$f$ that preserve $\Gamma$-equivariance.

\subsubsection{Guckenheimer--Holmes Cycle}
\label{subsubsec:gh}

Guckenheimer and Holmes~\cite{Guckenheimer.1988} famously considered the heteroclinic cycle that arises in the 
system
\begin{equation}
    \label{eq:gh_cubic}
    \begin{split}
        \dot{x}_1 &= x_1 + ax_1^3 + bx_1x_2^2 + cx_1x_3^2 \\
        \dot{x}_2 &= x_2 + ax_2^3 + bx_2x_3^2 + cx_1^2x_2 \\
        \dot{x}_3 &= x_3 + ax_3^3 + bx_1^2x_3 + cx_2^2x_3
    \end{split}
\end{equation}
where $x=(x_1, x_2, x_3)\in\R^3$.
Note that these equations are equivariant with respect to the group $\Gamma = \langle\tau_1,\tau_2,\tau_3,\rho\rangle$, where the transpositions~$\tau_j$ act by sending $x_j\mapsto-x_j$ (keeping the other coordinates fixed) and the cyclic permutation $\rho:(x_1,x_2,x_3)\mapsto(x_2,x_3,x_1)$ permutes the coordinates. These equations can be interpreted as a \emph{network dynamical system} with three nodes (we will make this interpretation more precise below), where the state of node~$k\in\sset{1,2,3}$ is determined by $x_k\in\R$ and the interaction with the other nodes is determined by the parameters~$b,c$. As in Lotka--Volterra-type systems~\cite{Afraimovich2004}, the ``extinction subspaces'' $P_{12}, P_{23}, P_{31}$ where one of the coordinates is zero---e.g., $P_{12}=\Fix(\langle\tau_3\rangle)=\{x_3=0\}$---are dynamically invariant as fixed point subspaces of the transpositions.

The \emph{Guckenheimer--Holmes cycle} now connects the hyperbolic saddle equilibria
\begin{align*}
    \xi_1 &= (1/\sqrt{-a},0,0)^\tr,\\
    \xi_2 &= (0,1/\sqrt{-a},0)^\tr,\\
    \xi_3 &= (0,0,1/\sqrt{-a})^\tr
\end{align*}
that lie on the three coordinate axes; see \Cref{fig:HetCycles}(a) for a sketch of the heteroclinic cycle.
The cycle exists under the conditions $a+b+c=-1, -\frac{1}{3}<a<0, c<a<b<0$ and is robust as the heteroclinic connections are contained in the subspaces~$P_{kj}$.
Moreover, it attracts all trajectories that are not on the coordinate planes or the diagonals $\{x_1=\pm x_2=\pm x_3 \}$.

\begin{figure}
\begin{overpic}[width=0.7\linewidth]{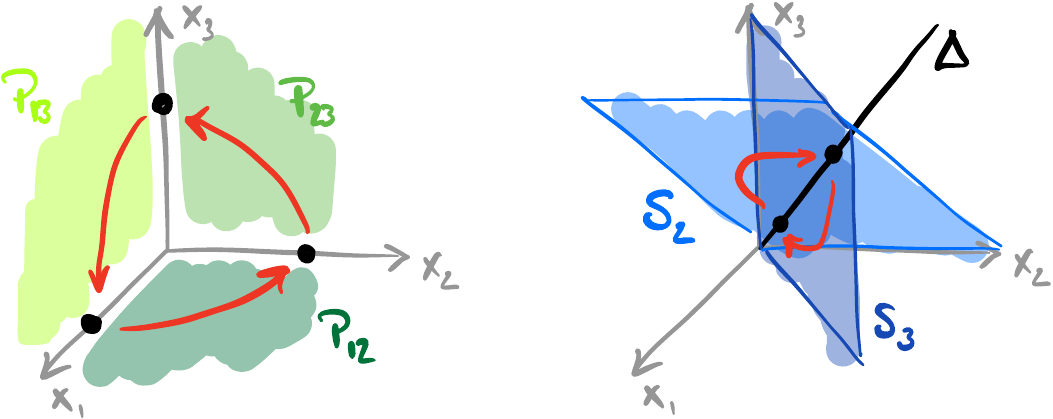}
        \put(-1,37){\textbf{(a)}}
        \put(53,37){\textbf{(b)}}
    \end{overpic}
\caption{\label{fig:HetCycles} Sketch of the Guckenheimer--Holmes (left) and Field cycle (right).}
\end{figure}

\newcommand{\Gr}{\mathsf{G}}
\newcommand{\Er}{\mathsf{E}}
\newcommand{\Vr}{\mathsf{V}}
\subsubsection{Constructing Robust Heteroclinic Cycles}
\label{subsubsec:fields-cycle}
While the Guckenheimer--Holmes cycle is an example of a specific heteroclinic cycle for a particular system, it is---more generally---possible to \emph{realize} general classes of graphs as heteroclinic structures in the phase space of a dynamical system~\cite{Ashwin2013,Field.2015}.
Here we consider the approach by Field~\cite{Field.2015} where a graph is realized as a heteroclinic structure in a network dynamical system (more specifically, a coupled cell system~\cite{Stewart2003}) in the following way: 
If~$\Gr=(\Vr,\Er)$ is the given graph to be realized as a heteroclinic structure with vertices~$\Vr$ and $N-1$ edges~$\Er$, consider a network dynamical system with~$N$ nodes such that the state of node~$k$ is given by $x_k\in\R$. 
Let $\Delta := \sset{(x_1, \dotsc, x_N)\in\R^N: x_1=x_2=\dotsb=x_N}$ denote the diagonal where all nodes are synchronized and let $S_j = \sset{(x_1, \dotsc, x_N)\in\R^N: x_l=x_k \text{ for }k,l\neq j}$ denote the set where all nodes except node~$j$ are synchronized.
For the specific class of systems in which the heteroclinic structure is realized, these subspaces are dynamically invariant. 
Then for each vertex $v_p\in\Vr$ there is a synchronized equilibrium $\xi_p\in\Delta$ and for each edge $(v_p,v_q)\in\Er$ there is a~$j(p,q)$ and a heteroclinic connection $\HC{\xi_p}{\xi_q}\subset S_{j(p,q)}$ along which the $j(p,q)$th node is not synchronized to the others.

For concreteness, we will focus on a minimal example of this general construction: 
We consider a heteroclinic cycle that consists of two equilibria~$\xi_1$ and~$\xi_2$ with reciprocal heteroclinic connections $C_1:=\HC{\xi_1}{\xi_2}$ and $C_2:=\HC{\xi_2}{\xi_1}$ in a network dynamical system that consists of $N=3$ nodes~\cite{Aguiar.2011}. In the following we will refer to this as the \emph{Field cycle}; see \Cref{fig:HetCycles}(b) for a sketch of the heteroclinic cycle.
Since the heteroclinic connection~$C_1$ lies in the invariant subspace $S_2 = \sset{x_1=x_3}$ and~$C_2$ in $S_3=\sset{x_2=x_3}$, the heteroclinic cycle is robust with respect to perturbations that preserve these invariant subspaces (e.g., perturbations with symmetry).

\subsection{Hypergraphs and Network Dynamics}
\label{Sec:Setup}

Hypergraphs as combinatorial objects are convenient to capture the interaction structure of network dynamical systems. A \emph{directed hypergraph~$\mathcal{H}=(\Vc,\Ec)$} on~$N$ vertices which consists of a set of vertices $\Vc = \sset{1, \dotsc, N}$ and each directed hyperedge $e\in\Ec$ (in the following simply \emph{edge}) can be written as $e=(T,H)$ with \emph{tail} $T=T(e)\in \Pf(\Vc)$ and \emph{head} $H=H(e)\in \Pf(\Vc)$. 
(We discuss the special case of an undirected hypergraph below.)
We call $m=\abs{T(e)}+1=:|e|$ the \emph{order} of a hyperedge~$e$ and write~$\Ec^{(m)}$ for all hyperedges of order~$m$; obviously $\Ec=\bigcup_{m\geq2}\Ec^{(m)}$.
A hypergraph~$\Hc$ is \emph{$m$-uniform} if $\Ec = \Ec^{(m)}$.

\subsubsection{Network Dynamics on Hypergraphs}

We now consider network dynamical systems consisting of~$N$ nodes that are compatible with the network structure determined by a hypergraph~$\Hc$.
Suppose that the state of node~$k$ is given by $x_k\in \Xbb = \Rd$.
If uncoupled, each node will evolve according to their (identical) intrinsic dynamics, determined by $F:\Xbb\to\Xbb$.
For an edge~$e$ with $k\in H(e)$ the state~$x_k$ will be influenced by the states of the nodes in~$T(e)$ through an interaction function~$G_e: \Xbb^{1+\abs{T(e)}}\to\Xbb$.
For~$e=(T,H)$ and $k\in H(e)$ we write~$G_e(x_k; x_T)$.
{The function~$G_e$ captures dependencies so we assume that if $G_e\neq 0$ then it depends nontrivially on~$x_T$; we identify~$G_e=0$ with $e\not\in\Ec$.}
Since the hyperedges are sets by definition---which do not depend on the ordering of their elements---it is natural to assume some invariance properties of the interaction functions 
 and assume that~$G_e$ is invariant under permutations of the elements of~$x_T$. 
A coupling function is \emph{nodespecific} if the dependency on the first coordinate is nontrivial and \emph{nodeunspecific} otherwise, i.e., if $G_e(x_k; x_T)=G_e(x_T)$.
Note that a nodeunspecific coupling function does not imply that the coupling is not state dependent: 
The receiving node~$k$ can still be contained in the tail of the hyperedge.

Taken together, the state of node~$k$ for the network dynamical system on a hypergraph~$\Hc$ evolves according to
\begin{equation}
    \label{eq:hypernetdyn}
    \dot x_k = F(x_k) + 
    \sum_{e\in\Ec: k\in H(e)} G_e(x_k;x_{T}),
\end{equation}
where~$F:\Xbb\to\Xbb$ describes the intrinsic dynamics of each node and the coupling function $G_e:\Xbb^{m+1}\to\Xbb$ describes how nodes interact along the hyperedge~$e$. 
A common assumption is to assume that there is only a single interaction function for each hyperedge order~$m$, that is $G_e = \go{m}$ for all $e\in \Ec^{(m)}$---that is, the interaction is \emph{homogeneous in each order~$m$}. 
We will typically make this assumption.

Since it is much more common to consider dynamics on undirected hypergraphs, we briefly discuss this special case. 
A directed hypergraph~$\Hc$ is \emph{undirected} if all edges are of the form $e=(A,A)$ for $A\in\Pf(\VV)$. For notational simplicity we identify an undirected hyperedge $e=(A,A)$ with the set~$A$.
This means that for a network dynamical system~\eqref{eq:hypernetdyn}, each node in an undirected hyperedge gets the same input from all other nodes contained in the edge.\footnote{While this definition of undirectedness is not equivalent to the standard definition of an undirected graph if the maximal order of an edge is two, it relates to undirected graphs if one considers hyperedges of the form $(\{k,j\}, \{k,j\})$.}

\subsubsection{Interactions for One-Dimensional Node Dynamics}
\label{sec:HypDyn}

\newcommand{\nb}{\mathbf{n}}

Since the aim is to relate the emergence of heteroclinic cycles to higher-order interactions, we expand the network vector field~\Cref{eq:hypernetdyn}. Specifically, we focus on one-dimensional node dynamics\footnote{This is not a restriction as one can always see a network of~$N$ $d$-dimensional nodes as a network of~$Nd$ one-dimensional nodes.} $\Xbb = \R$. For order~$m$ interactions, that is, the state~$z\in\R$ of a node in the head is influenced by the states $y_1, \dotsc, y_{m-1}\in\R$ in the tail, we  expand the interaction functions~$G^{(m)}(z; y_1, \dotsc, y_{m-1})$ formally as
\begin{subequations}    
\label{eq:expansion}
\begin{align}
G^{(2)}(z; y_1) &= a^{(2)}_{1}(z)y_1 + a^{(2)}_{2}(z)y_1^2 + \dotsb\\
\begin{split}
G^{(3)}(z; y_1,y_2) &= a^{(3)}_{10}(z)y_1+ a^{(3)}_{01}(z)y_2\\
&\qquad
+ a^{(3)}_{20}(z)y_1^2+ a^{(3)}_{11}(z)y_1y_2+ a^{(3)}_{02}(z)y_2^2+\dotsb    
\end{split}\\
&\ \ \vdots\nonumber
\intertext{or written more compactly for order~$m+1$ with input vector $y\in\R^m$ and an $m$-dimensional multi-index $\nb=(n_1, \dotsc, n_m)\in\N^m$, $\abs{\nb}=n_1+\dotsb n_m$, $y^\nb=y_1^{n_1}\dotsb y_{\vphantom{1}m}^{n_m}$ as}\nonumber
G^{(m+1)}(z; y) &= \sum_{o=1}^{\infty}\sum_{\abs{\nb}=o}\ao{m+1}_\nb(z)y^\nb.
\end{align}
\end{subequations}
{First, note that the zeroth-order coefficient vanishes to ensure a nontrivial dependence on the states of nodes in the tail of the hyperedge.
Second, }the requirement that the functions are invariant under permutations of the inputs~$y$ imposes conditions on the coefficients $\ao{m+1}_\nb(z)$: For example, we have to have $\ao{3}_{010}(z)=\ao{3}_{001}(z)$. If the coupling functions are nodeunspecific, then the coefficients are constant with respect to the node state~$z$, that is, $\ao{m+1}_\nb(z)=\ao{m+1}_\nb$.

A particular choice of interaction function still leaves some ambiguity in terms of the network; see also~\cite{Aguiar2020} for a more detailed discussion. First, the interaction function $G^{(m)}(z; y_1, \dotsc, y_{m-1})$ may not depend on one (or more) of the~$y_l$ (as the relevant coefficients~$\ao{m+1}_\nb(z)$ vanish). This means that the interactions along $m$-dimensional edges are actually of lower order, say~$n$, and there is a possibility of two ``types'' of order~$n$ interactions, the ones determined by~$G^{(m)}$ and those by~$G^{(n)}$. Second, we do not impose that~$G^{(m)}$ is of minimal order~$m$. That means that $G^{(m)}(z; y_1, \dotsc, y_{m-1}) = y_1+\dotsb+y_{m-1}$---an interaction function that can be realized with a graph with pairwise edges---is a valid choice of interaction function for a hyperedge of order~$m$. In both cases, we say that the coupling (realized by the coupling function~$G^{(m)}$) is \emph{effectively} of a lower order~$n$.

\section{Obstruction to Heteroclinic Cycles for Network Dynamics on Undirected Hypergraphs}
\label{Sec:Undirected}
\noindent
In this section, we consider network dynamical systems with higher-order interactions on \emph{undirected} hypergraphs and see whether the heteroclinic cycles in Section~\ref{sec:HetCycles} can be realized in the resulting vector fields.
Recall that network dynamics on undirected hypergraphs have strong homogeneity properties properties: Each node in a hyperedge is affected by each other node in the same way.
As a result, we find that the undirected setup is quite restrictive as the resulting vector fields are constrained by the symmetries.

\newcommand{\Kc}{\mathcal{K}}

Consider a network dynamical system with one-dimensional node phase space (cf.~Section~\ref{Sec:Setup}) on an undirected hypergraph~$\Hc$ with three vertices $\Vc=\sset{1,2,3}$. As there are precisely~$7$ nontrivial undirected edges, the complete undirected hypergraph~$\Kc$ has edges
\begin{align*}
    \HHo{1} &= \{\uhe{1}, \uhe{2}, \uhe{3} \}, &
    \HHo{2} &= \{\uhe{1,2}, \uhe{1,3}, \uhe{2,3}\},\\
    \HHo{3} &= \{\uhe{1,2,3}\}, \text{ and}& 
    \HHo{\ell} &= \emptyset \text{ whenever } \ell >3.
\end{align*}
In the most general case, the coupling function may depend on the specific edge and the equations~\eqref{eq:hypernetdyn} for dynamics on~$\Kc$ read
\begin{equation}
    \label{eq:hypernetdyn-undirected}
    \begin{split}
        \dot x_1 &= F(x_1) + G_{\uhe{1}}(x_1;x_1) + G_{\uhe{1,2}}(x_1; x_1, x_2) + G_{\uhe{1,3}}(x_1;x_1,x_3) \\
            &\qquad + G_{\uhe{1,2,3}}(x_1;x_1,x_2,x_3) \\
        \dot x_2 &= F(x_2) + G_{\uhe{2}}(x_2;x_2) + G_{\uhe{1,2}}(x_2; x_1, x_2) + G_{\uhe{2,3}}(x_2;x_2,x_3) \\
            &\qquad  + G_{\uhe{1,2,3}}(x_2;x_1,x_2,x_3) \\
        \dot x_3 &= F(x_3) + G_{\uhe{3}}(x_3;x_3) + G_{\uhe{1,3}}(x_3; x_1, x_3) + G_{\uhe{2,3}}(x_3;x_2,x_3) \\
            &\qquad  + G_{\uhe{1,2,3}}(x_3;x_1,x_2,x_3).
    \end{split}
\end{equation}
We summarize the right hand side of~\eqref{eq:hypernetdyn-undirected} as~$\FF(x_1,x_2,x_3)$.

Of course, an undirected hypergraph on three vertices could also comprise only a subset of the edges presented above.
While this can be incorporated in the complete graph setup (e.g., by setting the coupling function of an edge to zero), we will comment explicitly below that this does not affect our main points.

\subsection{The Guckenheimer--Holmes Cycle} 
We fist consider the Guckenheimer--Holmes cycle in network dynamics on undirected hypergraphs.
\begin{thr}
    \label{thr:gh-undirected}
    The Guckenheimer--Holmes system \eqref{eq:gh_cubic} cannot be realized in a network dynamical system on an undirected hypergraph~$\Hc$ on three vertices~\eqref{eq:hypernetdyn-undirected}.
\end{thr}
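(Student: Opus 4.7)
The plan is to extract the antisymmetric part of each equation $\dot x_i$ under the transposition of the other two coordinates, and to show that the three conditions this imposes on the pairwise coupling functions in any undirected realization are mutually inconsistent whenever $b\neq c$, which is guaranteed in the Guckenheimer--Holmes regime $c<a<b<0$.

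For each $i\in\sset{1,2,3}$, let $\sigma_i$ denote the transposition of $x_j,x_k$ with $\sset{j,k}=\sset{1,2,3}\sm\sset{i}$, and set $A_i := \FF_i - \FF_i\circ\sigma_i$. A direct computation on (2.1) yields $A_i(x) = (b-c)\,x_i\bigl(x_j^2 - x_k^2\bigr)$ with a consistent cyclic convention for $(j,k)$. Computing the same quantity from (3.1), the contributions of $F(x_i)$ and $G_{\uhe{i}}(x_i;x_i)$ vanish since they depend only on $x_i$, and the contribution of $G_{\uhe{1,2,3}}(x_i;x_1,x_2,x_3)$ vanishes because the tail symmetry of $G_e$ makes it $\sigma_i$-invariant. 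Only the two pairwise edges incident to node~$i$ survive, and using the tail symmetry of those $G_e$ one packages the result as $A_i = D_i(x_i,x_j) - D_i(x_i,x_k)$, where $D_i(z,w)$ is the difference of $G_{\uhe{p,q}}(z;z,w)$ over the two pairwise edges incident to~$i$.

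Equating the two expressions for $A_i$ forces $D_i(z,w_1)-D_i(z,w_2) = \pm(b-c)\,z(w_1^2-w_2^2)$ for all $z,w_1,w_2$, and hence $D_i(z,w) = \pm(b-c)\,zw^2 + h_i(z)$ for some univariate~$h_i$. The three differences $D_1,D_2,D_3$ are not independent: since they are differences of the same three functions $G_{\uhe{1,2}},G_{\uhe{1,3}},G_{\uhe{2,3}}$, any one is determined by the other two by the identity
\[\bigl(G_{\uhe{1,3}}-G_{\uhe{2,3}}\bigr) = \bigl(G_{\uhe{1,2}}-G_{\uhe{2,3}}\bigr) - \bigl(G_{\uhe{1,2}}-G_{\uhe{1,3}}\bigr).\]
Substituting the derived forms of the $D_i$'s into this relation, with the signs that come out of the cyclic bookkeeping, yields an equation of the shape $3(b-c)\,zw^2 = h(z)$ for a univariate~$h$. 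Its left-hand side depends nontrivially on $w$ whenever $b\neq c$, while its right-hand side does not---a contradiction. The argument applies uniformly to any sub-hypergraph of the complete undirected hypergraph on three vertices, since absent edges correspond simply to zero couplings.

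The main technical care is the symmetry bookkeeping when extracting $A_i$ from (3.1): verifying that $G_{\uhe{1,2,3}}$ drops out and that tail symmetry of each pairwise $G_e$ reduces the expression to a single function of two variables per index. Once this is in place, the obstruction is purely algebraic and independent of the choices of $F$, $G_{\uhe{i}}$, and $G_{\uhe{1,2,3}}$, so it rules out all undirected realizations at once.
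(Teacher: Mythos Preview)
Your argument is correct and the sign bookkeeping checks out: with $D_1=G_{\uhe{1,2}}-G_{\uhe{1,3}}$, $D_2=G_{\uhe{1,2}}-G_{\uhe{2,3}}$, $D_3=G_{\uhe{1,3}}-G_{\uhe{2,3}}$ (all evaluated at $(z;z,w)$) one finds $D_1(z,w)=(b-c)zw^2+h_1(z)$, $D_2(z,w)=-(b-c)zw^2+h_2(z)$, $D_3(z,w)=(b-c)zw^2+h_3(z)$, and the cocycle identity $D_1-D_2+D_3\equiv 0$ then gives $3(b-c)zw^2=-(h_1-h_2+h_3)(z)$, the desired contradiction.

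The route, however, is genuinely different from the paper's. The paper exploits the cyclic symmetry~$\rho$ of the Guckenheimer--Holmes vector field: imposing $\rho$-equivariance on the undirected form~(3.1) forces the pairwise couplings $G_{\uhe{1,2}},G_{\uhe{1,3}},G_{\uhe{2,3}}$ (and likewise the order-one terms) to agree, after which each row of~(3.1) is manifestly invariant under swapping its two external inputs---incompatible with $b\neq c$. Your argument never invokes~$\rho$; instead you compare the three transposition-antisymmetric parts directly and close the loop via the linear dependence among the three pairwise differences. What the paper's approach buys is a structural explanation (``too much symmetry'') that feeds directly into the later narrative on how directed edges break homogeneity; what your approach buys is a self-contained algebraic obstruction that does not rely on the global $\Z_3$-symmetry of the target vector field and would adapt to targets lacking such symmetry.
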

\begin{proof}
    The Guckenheimer--Holmes cycle is a cycle between three equilibria that are related by a symmetry of the system (cf.~\Cref{subsubsec:gh}). Specifically, the vector field~\eqref{eq:gh_cubic} is equivariant with respect to the cyclic symmetries generated by the linear map $\rho(x_1,x_2,x_3)=(x_2,x_3,x_1)$.

    This places additional restrictions on the vector fields~\eqref{eq:hypernetdyn-undirected} for undirected hypergraph dynamics: For $\rho$, equivariance requires \begin{equation}
    \label{eq:GHundir}
        \FF(\rho(x_1,x_2,x_3))=\rho(\FF(x_1,x_2,x_3)).
    \end{equation}
    Writing each side of~\eqref{eq:GHundir} explicitly yields
    \begin{equation*}
    \FF(\rho(x_1,x_2,x_3)) = 
        \left(\begin{array}{l}
            F(x_2) + G_{\uhe{1}}(x_2;x_2) + G_{\uhe{1,2}}(x_2; x_2, x_3) \\ \qquad +\ G_{\uhe{1,3}}(x_2;x_2,x_1) + G_{\uhe{1,2,3}}(x_2;x_2,x_3,x_1) \\
            F(x_3) + G_{\uhe{2}}(x_3;x_3) + G_{\uhe{1,2}}(x_3; x_2, x_3) \\ \qquad +\ G_{\uhe{2,3}}(x_3;x_3,x_1) + G_{\uhe{1,2,3}}(x_3;x_2,x_3,x_1) \\
            F(x_1) + G_{\uhe{3}}(x_1;x_1) + G_{\uhe{1,3}}(x_1; x_2, x_1) \\ \qquad +\ G_{\uhe{2,3}}(x_1;x_3,x_1) + G_{\uhe{1,2,3}}(x_1;x_2,x_3,x_1)
        \end{array}\right),
    \end{equation*}
    while the right hand side is
    \begin{equation*}
    \rho(\FF(x_1,x_2,x_3)) = 
        \left(\begin{array}{l}
            F(x_2) + G_{\uhe{2}}(x_2;x_2) + G_{\uhe{1,2}}(x_2; x_1, x_2)\\ \qquad +\ G_{\uhe{2,3}}(x_2;x_2,x_3) + G_{\uhe{1,2,3}}(x_2;x_1,x_2,x_3) \\
            F(x_3) + G_{\uhe{3}}(x_3;x_3) + G_{\uhe{1,3}}(x_3; x_1, x_3)\\ \qquad +\ G_{\uhe{2,3}}(x_3;x_2,x_3) + G_{\uhe{1,2,3}}(x_3;x_1,x_2,x_3) \\
            F(x_1) + G_{\uhe{1}}(x_1;x_1) + G_{\uhe{1,2}}(x_1; x_1, x_2)\\ \qquad +\ G_{\uhe{1,3}}(x_1;x_1,x_3) + G_{\uhe{1,2,3}}(x_1;x_1,x_2,x_3)
        \end{array}\right).
    \end{equation*}
    So for~\eqref{eq:GHundir} to hold while using the fact that each interaction function~$G_e(x_k;x_{T(e)})$ is invariant under arbitrary permutations of the arguments in~$x_{T(e)}$, we obtain the restrictions
    \begin{align*}
        &G_{\uhe{1}}(z;y_1) = G_{\uhe{2}}(z;y_1) =          G_{\uhe{3}}(z;y_1), \\
        &G_{\uhe{1,2}}(z;y_1,y_2) = G_{\uhe{1,3}}           (z;y_1,y_2) = G_{\uhe{2,3}}(z;y_1,y_2).
    \end{align*}
    In particular, the coupling is homogeneous in every order $m=1,2,3$. Note that this observation does neither change when arbitrary hyperedges are not present nor when all coupling functions are nodeunspecific.

    As a result, each row of \eqref{eq:hypernetdyn-undirected} is invariant under permutation of the two input-variables---the $k$-th row is invariant under exchanging $x_i, x_j$ for $i,j\in\{1,2,3\}\setminus\{k\}$. This, however, is not true for the Guckenheimer--Holmes system \eqref{eq:gh_cubic} where the existence of the heteroclinic cycle depends crucially on $b\ne c$. Hence, this system cannot be realized by \eqref{eq:hypernetdyn-undirected}.
\end{proof}
\begin{remk}
    \label{remk:gh-apriori}
    The fact that the coupling is homogeneous in every order $m=1,2,3$ in the previous proof is a restriction that is imposed by the symmetry rather than a priori.
\end{remk}
The symmetry observations that are made for undirected hypergraphs can be summarized as follows: The Guckenheimer--Holmes system has a certain set of symmetries. On the other hand, restricting to undirected edges imposes additional symmetries on the system by the fact that certain terms are present in multiple equations. The combination of both results in the fact that there are in some sense `too many symmetries' present in the system for the heteroclinic cycle to emerge. On a more technical note, this is caused by the fact that the presence of all symmetries simultaneously yields that the equilibria in the cycle cannot have the necessary saddle stability properties.

\subsection{The Field Cycle}
\label{subsec:oscar-undirected}
Next, we investigate the Field cycle as an example of the more general construction in which heteroclinic cycles occur between fully synchronous equilibria in a system of three interacting nodes. 
Contrary to the Guckenheimer--Holmes cycle, the model in \cite{Aguiar.2011,Field2017,Weinberger.2018} does not specify a precise system of ordinary differential equations but rather proves that a given network structure allows for the realization of a heteroclinic cycle between two fully synchronous equilibria. 
While the key component for the Guckenheimer--Holmes cycle are the symmetries of the system, the main ingredient for this construction are specific synchrony subspaces that are dynamically invariant independent of the specific governing functions. 
In particular, the construction requires the dynamical invariance of the fully synchronous subspace $\Delta = \{x_1=x_2=x_3\}$ and of two of the partially synchronous subspaces $S_3 = \{x_1=x_2\}, S_2 = \{x_1=x_3\}$, and $S_1 = \{x_2=x_3\}$. 

\begin{thr}
    \label{thr:field-undirected}
    Dynamics on an undirected hypergraph on three vertices does not allow for the realization of the Field cycle.
\end{thr}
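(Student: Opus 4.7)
The plan is to show that invariance of two of the partial-synchrony subspaces $S_1, S_2, S_3$, which the Field cycle requires for robustness, forces the Jacobian at any fully synchronous equilibrium $\xi\in\Delta$ to be circulant under coordinate permutations. Its transverse spectrum is then a single eigenvalue of multiplicity two, which is incompatible with the saddle structure the Field cycle demands.

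First, I would translate the invariance conditions into algebraic identities on the coupling functions. After relabelling, assume that $S_2 = \{x_1 = x_3\}$ and $S_3 = \{x_1 = x_2\}$ are dynamically invariant. Forming $\dot x_1 - \dot x_2$ from \eqref{eq:hypernetdyn-undirected} and restricting to $S_3$, the contributions from $F$, from the pairwise edge $\uhe{1,2}$, and from the order-$3$ edge $\uhe{1,2,3}$ cancel automatically thanks to the tail-symmetry of each $G_e$. The surviving conditions reduce to the identities
\[ G_{\uhe{1}}(z;z) = G_{\uhe{2}}(z;z), \qquad G_{\uhe{1,3}}(z;z,y) = G_{\uhe{2,3}}(z;z,y) \quad \text{for all } z,y. \]
The analogous computation on $S_2$ yields $G_{\uhe{1}}(z;z) = G_{\uhe{3}}(z;z)$ and $G_{\uhe{1,2}}(z;z,y) = G_{\uhe{2,3}}(z;z,y)$.

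Next I would use these diagonal-slice identities to pin down $D\FF(\xi)$ at $\xi = (s,s,s)\in\Delta$. Although they do not force the various $G_e$ to coincide as three-variable functions, differentiating each identity at $\xi$ in the free variables and invoking tail-symmetry of $G_e$ is enough to conclude that all three pairwise couplings share a common receiver-partial $\alpha_2$ and tail-partial $\beta_2$ at $\xi$; write $\alpha_3, \beta_3$ for the analogous partials of the unique order-$3$ edge. A direct differentiation of \eqref{eq:hypernetdyn-undirected} at $\xi$ then produces the circulant Jacobian
\[ D\FF(\xi) = \begin{pmatrix} A & B & B \\ B & A & B \\ B & B & A \end{pmatrix}, \qquad B = \beta_2 + \beta_3, \]
with $A$ collecting the diagonal contributions from $F$, the self-loops, and the receiver and tail partials of the edges incident to each node.

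The spectrum of such a matrix is $A + 2B$ along $\Delta$ and the double eigenvalue $A - B$ on the entire transverse space $\Delta^\perp$, so every direction transverse to $\Delta$ has a common stability type. But the Field cycle requires $\xi_1$ to possess simultaneously an unstable direction in $S_2\cap\Delta^\perp$ and a stable direction in $S_3\cap\Delta^\perp$, two distinct one-dimensional subspaces of $\Delta^\perp$ with opposite stability signs. This is impossible for a scalar transverse Jacobian, yielding the desired contradiction. The main obstacle I anticipate is the bookkeeping in the middle step: the invariance identities only involve two-variable restrictions $G_e(z;z,y)$ of three-variable functions, so one has to check carefully that their first-order jets at $\xi$ nevertheless agree. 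Once that is secured, the rest is a straightforward linear-algebra computation.
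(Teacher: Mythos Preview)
Your approach is correct and essentially identical to the paper's: restrict the coupling functions via invariance of two partial-synchrony subspaces, compute the linearization at a fully synchronous equilibrium to obtain the matrix $AI+B(J-I)$ with a double transverse eigenvalue $A-B$, and conclude that the required saddle structure is impossible. Your caution about the slice identities is warranted---the separation of the single invariance equation into two independent equalities (which the paper also performs) is not strictly justified in general, but only the combined identity is needed to pin down the Jacobian at~$\xi$, so the argument goes through.
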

\begin{proof}
    We evaluate the restrictions on general dynamics on an undirected hypergraph~\eqref{eq:hypernetdyn-undirected} of three vertices imposed by the dynamical invariance of the (partial) synchrony subspaces. To that end, first consider the dynamics on $S_3 = \{x_1=x_2\}$ by substituting~$(x_1,x_2,x_3) = (z,z,y)$ into~\eqref{eq:hypernetdyn-undirected}. 
    The subspace is dynamically invariant if $\dot x_1=\dot x_2$, that is,
    \begin{align*}
        &F(z) + G_{\uhe{1}}(z;z) + G_{\uhe{1,2}}(z;z,z) + G_{\uhe{1,3}}(z;z,y) + G_{\uhe{1,2,3}}(z;z,z,y) \\
        &= F(z) + G_{\uhe{2}}(z;z) + G_{\uhe{1,2}}(z;z,z) + G_{\uhe{2,3}}(z;z,y) + G_{\uhe{1,2,3}}(z;z,z,y).
    \end{align*}
    Canceling out equal terms gives
    \[ G_{\uhe{1}}(z;z) + G_{\uhe{1,3}}(z;z,y) = G_{\uhe{2}}(z;z) + G_{\uhe{2,3}}(z;z,y). \]
    In particular, the coupling functions have to satisfy
    \[ G_{\uhe{1}} = G_{\uhe{2}} \quad \text{and} \quad G_{\uhe{1,3}} = G_{\uhe{2,3}}. \]
    Identical considerations for $S_2 = \{x_1=x_3\}$ and $S_1 = \{x_2=x_3\}$ yield 
    \begin{align*}
        &G_{\uhe{1}} = G_{\uhe{3}}, \quad G_{\uhe{1,2}} = G_{\uhe{2,3}} \quad \text{and} \\
        &G_{\uhe{2}} = G_{\uhe{3}}, \quad G_{\uhe{1,2}} = G_{\uhe{1,3}}
    \end{align*}
    respectively.

    Thus, invariance of any two of the subspaces~$S_k$, the coupling functions have to satisfy
    \[ G_{\uhe{1}} = G_{\uhe{2}} = G_{\uhe{3}} \quad \text{and} \quad G_{\uhe{1,2}} = G_{\uhe{1,3}} = G_{\uhe{2,3}}. \]
    As a result, the coupling is homogeneous in each order~$m$.
    Invariance of the fully synchronous subspace follows trivially from the intersection of the two-dimensional subspaces. 
    Again, note that none of these observations change when any hyperedges are not present (they could be represented by a coupling function~$0$) or when the coupling functions are all nodeunspecific.

    The goal of the construction is to construct heteroclinic connections in partially synchronous spaces between two hyperbolic equilibria in the fully synchronous subspace. In order for such a heteroclinic connection to be possible, equilibria in $\Delta = \{x_1=x_2=x_3\}$ need both a stable and an unstable direction outside of the fully synchronous subspace. 
    However, a quick calculation shows that the network structure in the equations is too restrictive for this to happen: The linearization of~\eqref{eq:hypernetdyn-undirected} at $\x=(z,z,z)^\tr \in \Delta$ is
    \begin{equation}
        \label{eq:oscar1-lin}
        D\mathbf{F}(\x) = \begin{pmatrix}
            \theta & \eta + \zeta & \eta + \zeta \\
            \eta + \zeta & \theta & \eta + \zeta \\
            \eta + \zeta & \eta + \zeta & \theta
        \end{pmatrix},
    \end{equation}
    where, with $\partial_j$ denoting differentiation with respect to the $j$th component,
    \begin{align*}
        \alpha &= F'(z), \\
        \beta &= \partial_1\go{1}(z;z), \\
        \gamma &= \partial_1 \go{2}(z;z,z), \\
        \delta &= \partial_1 \go{3}(z;z,z,z) \\
        \epsilon &= \partial_2 \go{1}(z;z), \\
        \eta &= \partial_2 \go{2}(z;z,z) = \partial_3       \go{2}(z;z,z), \\
        \zeta &= \partial_2 \go{3}(z;z,z,z) =               \partial_3 \go{3}(z;z,z,z) = \partial_4         \go{3}(z;z,z,z),\\
        \theta &= \alpha + \beta + 2 \gamma + \delta +      \epsilon + 2 \eta + \zeta. 
    \end{align*}
    This matrix has eigenvalues $\la_1=\theta+2(\eta + \zeta)$ and $\la_2=\la_3=\theta-(\eta+\zeta)$ with corresponding eigenvectors $(1,1,1)^\tr$ and $(1,-1,0)^\tr, (1,0,-1)^\tr$. In particular, in the generic situation where the two eigenvalues are not identical, any equilibrium in the fully synchronous space can either have a $2$-dimensional stable or a $2$-dimensional unstable manifold outside of~$\Delta$. Hence, reciprocal heteroclinic connections outside of~$\Delta$ are impossible.
\end{proof}
\begin{remk}
    Similar to \Cref{remk:gh-apriori}, the fact that the coupling functions need to be homogeneous in every order results from the restrictions on the vector field to realize the heteroclinic cycle rather than a priori. 
\end{remk}

\section{The Guckenheimer--Holmes Cycle in Directed Hypergraphs}
\label{sec:gh-directed}
\noindent
The situation for directed hypergraphs is more complex than the undirected case. On three vertices $\VV =\{1,2,3\}$ there are $49$ non-trivial directed hyperedges of order at most three---including the seven investigated in the previous section. 
Rather than investigating all possible configurations of hyperedges that exist in the hypergraph, we will  describe specific cases in which the heteroclinic constructions are possible due to the increased complexity. Specifically, we interpret the system~\eqref{eq:gh_cubic} as a (higher-order) network dynamical system with additive coupling as in~\eqref{eq:hypernetdyn}. This includes the construction of a suitable interaction structure as well as of the correct coupling functions. As opposed to dynamics on an undirected hypergraph, this construction is possible when directed hyperedges are present.

One of the major tools for the construction is the observation that system~\eqref{eq:gh_cubic} can be represented as
\begin{align*}
    \dot{x}_1 &= f(x_1; x_2, x_3) \\
    \dot{x}_2 &= f(x_2; x_3, x_1) \\
    \dot{x}_3 &= f(x_3; x_1, x_2),
\end{align*}
where
\begin{equation}
    \label{eq:GH-F}
    f(z, y_1, y_2) = z + az^3 + bzy_1^2 + czy_2^2.
\end{equation}
In particular, the system can be interpreted as a homogeneous system of three interacting dynamical vertices, as all three internal dynamics are governed by the same function. Hence, it suffices to construct suitable couplings for one vertex and extend the construction to the remaining ones. We can make one immediate observation.

\begin{thr}\label{thm:NodeunspecificNoGH}
    The Guckenheimer--Holmes system~\eqref{eq:gh_cubic} cannot be realized on a directed hypergraph~$\Hc$ on three vertices as a network dynamical system with nodeunspecific coupling.
\end{thr}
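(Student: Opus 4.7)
The plan is to obtain a contradiction by comparing the monomial coefficients on the right-hand side of the equation for $\dot x_1$ in the Guckenheimer--Holmes system~\eqref{eq:gh_cubic} against the coefficients that are attainable via a sum of nodeunspecific coupling functions on edges of a hypergraph with $\Vc=\{1,2,3\}$. The equation for $\dot x_1$ contains the monomial $bx_1x_2^2$ with $b\neq 0$ (recall that the cycle requires $c<a<b<0$), whereas the monomial $x_1^2x_2$ is absent, i.e., appears with coefficient~$0$. I will show that nodeunspecific couplings cannot produce this asymmetry.

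The key observation is that, since the coefficients $\ao{m+1}_\nb$ in the expansion~\eqref{eq:expansion} do not depend on the head's state~$z$, the only way a variable $x_i$ can enter an interaction term is via the tail, i.e., through $i\in T(e)$. Hence, to contribute a monomial of the form $x_1^p x_2^q$ with $p,q\geq 1$ to~$\dot x_1$, an edge $e$ with $1\in H(e)$ must satisfy $T(e)\supseteq\{1,2\}$. On a three-vertex hypergraph the only options are therefore $T(e)=\{1,2\}$ and $T(e)=\{1,2,3\}$. For each such tail, I would then read off the cubic contributions to $\dot x_1$ from the symmetric expansion~\eqref{eq:expansion}. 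For $T=\{1,2\}$, any symmetric cubic in two variables is a linear combination of $x_1^3+x_2^3$ and $x_1^2x_2+x_1x_2^2$, so the coefficients of $x_1x_2^2$ and $x_1^2x_2$ coincide. For $T=\{1,2,3\}$, any fully symmetric cubic is spanned by $x_1^3+x_2^3+x_3^3$, $x_1x_2x_3$, and the sum $\sum_{i\neq j}x_i^2x_j$, which again forces the coefficients of $x_1x_2^2$ and $x_1^2x_2$ to be equal (and in fact equal to those of $x_1^2x_3$, $x_1x_3^2$, etc.).

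Summing over the at most two contributing edges, the coefficients of $x_1x_2^2$ and $x_1^2x_2$ in $\dot x_1$ must be equal. Combining this with the Guckenheimer--Holmes equation, which demands coefficient $b$ for $x_1x_2^2$ and $0$ for $x_1^2x_2$, forces $b=0$, contradicting $b<0$. The analogous argument for $x_1x_3^2$ versus $x_1^2x_3$ gives $c=0$, providing a second contradiction.

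I do not anticipate any real obstacle in this argument beyond bookkeeping: the enumeration of contributing tails is finite and the symmetry constraints on the coefficients of symmetric polynomials are elementary. The conceptual point, which I would emphasize in the write-up, is that nodeunspecific coupling converts the head-node variable into just another tail argument on the same footing as the other inputs, so the symmetry of the coupling function in tail variables is incompatible with the asymmetric polynomial form of the Guckenheimer--Holmes vector field.
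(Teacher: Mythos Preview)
Your argument is correct and follows essentially the same idea as the paper's proof: both exploit that a nodeunspecific coupling function, being symmetric in its tail arguments, forces the coefficients of $x_1x_2^2$ and $x_1^2x_2$ in $\dot x_1$ to coincide, which contradicts the Guckenheimer--Holmes vector field where the former is~$b\neq 0$ and the latter is~$0$. Your formulation via a basis of symmetric cubics is somewhat cleaner than the paper's ``add a term, then try to cancel its symmetric partner'' phrasing, but the underlying mechanism is identical.

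One small imprecision: ``summing over the at most two contributing edges'' should read ``over edges with tails of the two possible types,'' since several hyperedges with tail $\{1,2\}$ (or $\{1,2,3\}$) but different heads containing vertex~$1$ may all contribute to $\dot x_1$. This does not affect the argument, as each such contribution individually has equal coefficients for $x_1x_2^2$ and $x_1^2x_2$, and hence so does their sum.
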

\begin{proof}
    The governing function~$f$ in the cubic Guckenheimer--Holmes system~\eqref{eq:GH-F} contains internal dynamics $z+az^3$ and two coupling terms~$bzy_1^2$ and~$czy_2^2$. Heuristically speaking, these couplings are nodespecific and thus cannot be realized by nodeunspecific coupling functions. More precisely, the coupling occurs in mixed terms of the state of the vertex~$z$ and the state one of its neighbors~$y_1$ or~$y_2$. Hence, a nodeunspecific coupling function would require the head vertex to be an element of the tail as well to be able to generate such a term, since it is of the form $\goe{m}(z;y_T)=\goe{m}(y_T)$. This can only be the case in hyperedges of order three or greater. If the term $bzy_1^2$ is contained in one of the nodeunspecific coupling functions, e.g., $\goe{3}(z,y_1)$ for a hyperedge $e\in\HHo{3}$, then so is $bz^2y_1$ due to the symmetry of the coupling function. This term then has to be removed by another coupling function, that is $-bz^2y_1$ has to be contained in another nodeunspecific coupling function e.g. $G^{(3)}_{e'}(z,y_1)$. But then, by the same symmetry argument, so is $-bzy_1^2$ and we also remove the wanted from the equation. This shows, that the function~$f$ cannot be realized by nodeunspecific coupling functions.
\end{proof}

\subsection{Realisation in a Directed Classical Network}
\label{subsubsec:gh-classical}
\begin{figure}
    \centering
    \resizebox{!}{.3\textwidth}{
        \begin{tikzpicture}[
	square/.style = {
		regular polygon,
		regular polygon sides=4
	},
	main node/.style={
		line width=1.5pt, 
		circle,
		draw,
		font=\sffamily,
		inner sep=2pt,
		fill=white
	},
	second node/.style={
		line width=1.5pt, 
		square, 
		draw, 
		font=\sffamily,
		rounded corners, 
		inner sep=2pt
	},
	edge/.style={
		-stealth,
		shorten >=1pt,
		shorten <=1pt,
		line width=1.5pt
	},
	hyperedge/.style={
		Round Cap-{Triangle[length=3mm, width=2mm]},
		line width=5pt
	}
	]
	\node[rotate=50, regular polygon, regular polygon sides=3, minimum width=3cm] (tr) at (0,0) {};
	
	\node[main node] (1) at (tr.corner 1) {};
	\node[main node] (2) at (tr.corner 2) {};
	\node[main node] (3) at (tr.corner 3) {};
	
	\path[line width=1.5pt]
	(2) edge [edge, bend left=10] (1)
	(3) edge [edge, bend left=10] (1)
	
	(1) edge [edge, bend left=10] (2)
	(3) edge [edge, bend left=10] (2)
	
	(1) edge [edge, bend left=10] (3)
	(2) edge [edge, bend left=10] (3)
	;
\end{tikzpicture}%
	}%
    \caption{A $3$-vertex graph that supports heteroclinic dynamics.}
    \label{fig:oscar}
\end{figure}
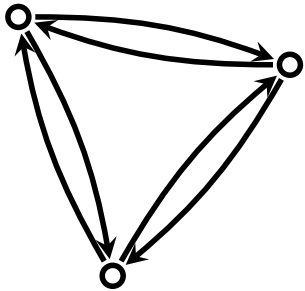

Observe that that governing function~$f$ contains effective pairwise interactions only.
Hence, one may want to realize the vector field as the network vector field of a classical network dynamical system with pairwise coupling.
For now we consider the most general case where the coupling function may depend on the edge; we will later discuss the strict constraints a homogeneity assumption would place on the dynamics.
The three subsystems in~\eqref{eq:gh_cubic} are all-to-all coupled. 
Hence, the network~$\Hc$ in consideration has to have edges
\begin{equation}
    \label{eq:gh-H1}
    \begin{split}
        \HHo{2} &= \{ \he{1}{1}, \he{1}{2}, \he{1}{3}, \\
                &\phantom{= \{} \he{2}{1}, \he{2}{2}, \he{2}{3}, \\
                &\phantom{= \{} \he{3}{1}, \he{3}{2}, \he{3}{3}
        \}, \text{ and}\\
        \HHo{m} &= \emptyset \text{ whenever } m >2,
    \end{split}
\end{equation}
as represented by \Cref{fig:oscar} (note that self-loops $\he{1}{1}, \he{2}{2}$, and $\he{3}{3}$ are not included in the figure).

\begin{thr}
    The (hyper)graph~$\Hc$ defined in \eqref{eq:gh-H1} allows for the realization of the Guckenheimer--Holmes system \eqref{eq:gh_cubic} as a network dynamical system if the coupling is not homogeneous.
\end{thr}
\begin{proof}
    Without loss of generality, we may realise terms that do not describe interactions via the internal dynamics exclusively by setting
    \[ F(z) = z + az^3. \]
    This would allow us to discard the self-loops from the network entirely. To generate the remaining terms $bzy_1^2 + czy_2^2$ as the sum of two pairwise coupling functions, we need two types thereof:
    \[ \go{2.1}(z;y_1) = bzy_1^2, \quad \go{2.2}(z;y_1) = czy_1^2. \]
    This yields
    \[ f(z,y_1,y_2) = F(z) + \go{2.1}(z;y_1) + \go{2.2}(z;y_2). \]
    To realize the explicit system \eqref{eq:gh_cubic} we obtain the desired vector field for
    \begin{align*}
        G_{\he{1}{2}}=G_{\he{2}{3}}=G_{\he{3}{1}} &= \go{2.1} ,\\
        G_{\he{1}{3}}=G_{\he{2}{1}}=G_{\he{3}{2}} &= \go{2.2}.
    \end{align*}
\end{proof}

The assumption of two different coupling functions can be regarded as two types of edges and the network would more precisely be represented as in \Cref{fig:gh_net}. If we assume homogeneity in the coupling, i.e., $G_e = \go{2}$ for all $e\in\HH$, this construction would force $b=c$. 
In this scenario, however, the equilibria are not saddles any longer and the heteroclinic cycle cannot exist.
Thus, the emergence of the Guckenheimer--Holmes cycle depends crucially on the fact that the governing function~$f$ may distinguish between the interaction of a given vertex with the other two. In a classical network with pairwise coupling, this requires two edge types and we say the network has \emph{asymmetric inputs}. 

\begin{figure}
    \centering
    \resizebox{!}{.3\textwidth}{
        \begin{tikzpicture}[
	square/.style = {
		regular polygon,
		regular polygon sides=4
	},
	main node/.style={
		line width=1.5pt, 
		circle,
		draw,
		font=\sffamily,
		inner sep=2pt,
		fill=white
	},
	second node/.style={
		line width=1.5pt, 
		square, 
		draw, 
		font=\sffamily,
		rounded corners, 
		inner sep=2pt
	},
	edge/.style={
		-stealth,
		shorten >=1pt,
		shorten <=1pt,
		line width=1.5pt
	},
	hyperedge/.style={
		Round Cap-{Triangle[length=3mm, width=2mm]},
		line width=5pt
	}
	]
	\node[rotate=50, regular polygon, regular polygon sides=3, minimum width=3cm] (tr) at (0,0) {};
	
	\node[main node] (1) at (tr.corner 1) {};
	\node[main node] (2) at (tr.corner 2) {};
	\node[main node] (3) at (tr.corner 3) {};
	
	\path[line width=1.5pt]
	(2) edge [edge, bend left=10] (1)
	(3) edge [edge, bend left=10, dashed] (1)
	
	(1) edge [edge, bend left=10, dashed] (2)
	(3) edge [edge, bend left=10] (2)
	
	(1) edge [edge, bend left=10] (3)
	(2) edge [edge, bend left=10, dashed] (3)
	;
\end{tikzpicture}%
	}%
    \caption{The $3$-vertex graph with two edge types that allows for the realization of the Guckenheimer--Holmes cycle.}
    \label{fig:gh_net}
\end{figure}
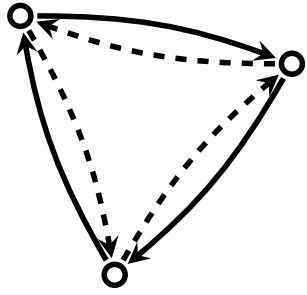

\emph{Heterogeneity in the coupling function is necessary} to realize the Guckenheimer--Holmes heteroclinic cycle. 
In the remainder of \Cref{sec:gh-directed}, we restrict ourselves to the case of homogeneous coupling in every order and explore how higher-order interactions can be used to break the symmetry in the inputs so that the Guckenheimer--Holmes cycle may arise.

\subsection{Only True \texorpdfstring{$2$}{2}-to-\texorpdfstring{$1$}{1} Connections}
\label{subsubsec:gh-2-to-1-only}
First, we note that a hypergraph can have `too few' hyperedges to induce heterogeneity in the inputs necessary for the Gucken\-heimer--Holmes cycle. In~\eqref{eq:gh_cubic}, each node receives inputs from both other nodes. One might consider this a triplet interaction so that the obvious choice would be to consider dynamics on the hypergraph with edges
\begin{equation}
    \label{eq:gh-H2}
    \begin{split}
        \HHo{3} &= \{\he{2,3}{1}, \he{1,3}{2}, \he{1,2}{3}\}, \text{ and}\\
        \HHo{m} &= \emptyset \text{ whenever } m \ne3.
    \end{split}
\end{equation}
Note that these are `true' triplet interactions as the head an tail sets of all edges do not intersect.

\begin{thr}
    Dynamics on a hypergraph~$\Hc$ with three vertices
    that only contains effective $2$-to-$1$ hyperedges does not allow for the realization of the Guckenheimer--Holmes system \eqref{eq:gh_cubic} as a network dynamical system.
\end{thr}
\begin{proof}
    A hypergraph $\Hc$ with three vertices that only contains effective $2$-to-$1$ hyperedges is necessarily of the form \eqref{eq:gh-H2}. We may trivially generate the interaction terms $bzy_1^2+czy_2^2$ via a third order coupling function $\go{3}(z;y_1,y_2)$. However, this requires the coupling function not to be invariant under transposition of the inputs: $\go{3}(z;y_1,y_2) \not\equiv \go{3}(z;y_2,y_1)$. In particular, this would require that each (directed) hyperedge (of order $3$) is sensitive to the order of its inputs, which is not part of the framework. Here, $\go{3}$---and as a result also~$f$---is symmetric in its inputs which forces $b=c$ so that the Guckenheimer--Holmes cycle cannot arise.
\end{proof}

\subsection{Directed Edges and True \texorpdfstring{$2$}{2}-to-\texorpdfstring{$1$}{1} Connections}
\label{subsubsec:gh-edge+2-to-1}
In contrast to \Cref{subsubsec:gh-2-to-1-only}, even if hyperedges are not sensitive to the order of their inputs, we may still break the symmetry in the inputs. The straightforward way to do so is to include only one pairwise edge targeting each vertex in addition to the triplet interaction. This allows each vertex to distinguish the inputs coming from any other vertex (only triplet or triplet and edge).
Again, the head an tail sets of all hyperedges do not intersect.

\begin{thr}
    Dynamics on the hypergraph $\Hc=(\Vc,\HH)$ on three vertices with edges
    \begin{equation}
        \label{eq:gh-H3}
        \begin{split}
            \HHo{2} &= \{\he{2}{1}, \he{3}{2}, \he{1}{3}\},\\
            \HHo{3} &= \{\he{2,3}{1}, \he{1,3}{2}, \he{1,2}{3}\}, \text{ and}\\
            \HHo{m} &= \emptyset \text{ whenever } m >3,
        \end{split}
    \end{equation}
    allows for the realization of the Guckenheimer--Holmes system \eqref{eq:gh_cubic} as a network dynamical system.
\end{thr} 
\begin{proof}
    To prove this assertion, we just give possible coupling functions explicitly. Define 
    \begin{align*}
        F(z)            &= z+az^3 \\
        \go{2}(z;y_1)     &= (b-c)zy_1^2 \\
        \go{3}(z;y_1,y_2)   &= czy_1^2 + czy_2^2 = \go{3}(z;y_2,y_1).
    \end{align*}
    They realize the desired vector field as in~\eqref{eq:GH-F} governed by 
    \[ f(z,y_1,y_2) = F(z) + \go{2}(z;y_1) + \go{3}(z;y_1,y_2). \]
\end{proof}
An analogous construction is possible in the hypergraph in which the classical edges---i.e., hyperedges of order $2$---have exchanged sources for each vertex simultaneously. As this example shows that it is possible to generate the cubic Guckenheimer--Holmes system \eqref{eq:gh_cubic} if the coupling is homogeneous in every order, we will restrict to this case in the remainder of this section.

\subsection{Self-Influence Through Intersecting Head and Tail}
So far, we have always assumed that the hyperedges have disjoint heads and tails, that is, the (hyper)edges are actually of effective order two and three.
By contrast, hyperedges where the head and tail sets intersect have a lower effective order. For example, the hyperedge $\he{1,2}{1}$ is degenerate in the sense that it corresponds to an effective coupling between a pair of nodes.
Such coupling gives more flexibility to choose interaction functions to obtain a desired vector field. 
This allows to construct the Guckenheimer--Holmes system~\eqref{eq:gh_cubic} similarly to \Cref{subsubsec:gh-edge+2-to-1}. 

\begin{thr}\label{thm:Degenerate}
    The hypergraph $\Hc=(\Vc,\HH)$ on three vertices with hyperedges
    \begin{equation}
        \label{eq:gh-H4}
        \begin{split}
            \HHo{2} &= \{\he{2}{1}, \he{3}{2}, \he{1}{3}\},\\
            \HHo{3} &= \{\he{1,3}{1}, \he{1,2}{2}, \he{2,3}{3}\}, \text{ and}\\
            \HHo{m} &= \emptyset \text{ whenever } m >3,
        \end{split}
    \end{equation}
    allows for the realization of the Guckenheimer--Holmes system~\eqref{eq:gh_cubic} as a network dynamical system.
\end{thr} 
\begin{proof}
    With coupling functions
    \begin{align*}
        F(z)            &= z+(a-c)z^3 \\
        \go{2}(z;y_1)     &= bzy_1^2 \\
        \go{3}(z;y_1,y_2)   &= czy_1^2+czy_2^2 = \go{3}(z;y_2,y_1)
    \end{align*}
    the hypergraph~$\Hc$ realizes the desired governing function~$f$ as in~\eqref{eq:GH-F}, as
    \[ f(z,y_1,y_2) = F(z) + \go{2}(z;y_1) + \go{3}(z;y_1,y_2). \]
\end{proof}

Note that the symmetry of~$\go{3}$ causes an additional term~$cz^3$. This can be compensated by the internal dynamics function $F(z)$---we could have also included classical self-loop edges for this purpose.

Inspecting interaction terms in~\eqref{eq:gh_cubic}---or equivalently of the governing function~$f$ in~\eqref{eq:GH-F}---note that the monomial terms describe either self-influences or pairwise interactions.
This is also the reason why we can generate \eqref{eq:gh_cubic} using only classical edges (cf.~\Cref{subsubsec:gh-classical}) as long as there are edges of two types.
Dynamics on a hypergraph with degenerate higher-order interactions that are effectively pairwise interactions, as considered here, are a different way to generate two types of interactions necessary for the Guckenheimer--Holmes cycle.
While the example in \Cref{thm:Degenerate} has degenerate hyperedges of order three, this generalizes to degenerate hyperedges of any higher order.

\subsection{Uniform Hypergraphs}
\label{subsec:gh-directed-uniform}
The final class of dynamical systems we consider in the context of the Guckenheimer--Holmes cycle are network dynamics on $m$-uniform hypergraphs; these are commonly considered in the literature.
In particular, all hyperedges are of the same order.
Since the network will have three vertices, we only have to consider $2$-, $3$-, and $4$-uniform hypergraphs.

First, note that $2$-uniform hypergraphs are graphs and thus the question reduces to realizing the Guckenheimer--Holmes cycle in a classical network dynamical system with pairwise interactions.
As we have seen in \Cref{subsubsec:gh-classical}, this requires two different coupling functions for hyperedges of order two.
Hence, under the condition of homogeneous coupling in every order it is not possible to construct the Guckenheimer--Holmes cycle in a $2$-uniform hypergraph.

Second, we consider dynamics on $3$-uniform hypergraphs. 
If we do not allow self-influences---meaning no degenerate hyperedges---the only $3$-uniform hypergraph on three vertices is the one considered in \Cref{subsubsec:gh-2-to-1-only}. 
As we have shown, dynamics on such hypergraphs cannot realize the Guckenheimer--Holmes system~\eqref{eq:gh_cubic}.
If we allow for $3$-uniform hypergraphs with self-couplings, the resulting class of hypergraphs on three vertices is much larger. Dynamics on many such hypergraphs allow for the generation of the Guckenheimer--Holmes system~\eqref{eq:gh_cubic} as summarized in the following statement.

\begin{thr}
    \label{thr:gh-3-uniform}
    The Guckenheimer--Holmes system \eqref{eq:gh_cubic} can be realized as a network dynamical systems on $3$-uniform hypergraphs. The admissible hypergraphs can be organized into $20$ different categories.
\end{thr}
\noindent
Parts of the proof are simply technical and not very enlightening. For this reason, we present only the general idea with an example here. The remainder of the proof, as well as the list of suitable categories together with example hypergraphs can be found in \Cref{sec:app-gh-3-uniform}.
\begin{proof}
    We allow for all hyperedges of the forms $\he{i,j}{k}, \he{i,j}{k,l}$, and $\he{i,j}{1,2,3}$ where $i,j,k,l\in\{1,2,3\}$ with $i\ne j$ as well as $k\ne l$. These are all hyperedges of order three. In particular, the dynamics on any vertex is governed by the sum of the internal function~$F$ and multiple instances of~$\go{3}$ whose inputs are determined by the hyperedges that have the vertex in their head. 
    To generate the cubic Guckenheimer--Holmes system, this sum has to equal~$f$ in~\eqref{eq:GH-F} for any vertex. 
    
    We may focus on one arbitrary vertex, say vertex~$k$, for the construction. 
    We denote the state variable of the corresponding node in the network dynamical system by $z\in\R$ and those corresponding to the two neighbors by $y_1, y_2 \in\R$. To generate~$f$ in~\eqref{eq:GH-F}, the intrinsic dynamics~$F$ and coupling function~$\go{3}$ have to be polynomial in their arguments. 
    In fact, $F$~has to contain the monomials~$z$ and~$z^3$, while $\go{3}(z;y_1,y_2)$ has to contain the monomials~$zy_1^2$ and~$zy_2^2$ to be able to generate the corresponding terms in~$f$. 
    They are thus of the form
    \begin{align*}
        F(z)            &= \alpha_1z + \alpha_2z^3 + P(z), \\
        \go{3}(z;y_1,y_2)   &= \beta zy_1^2+ \beta zy_2^2 +P'(z;y_1,y_2),
    \end{align*}
    where~$P$ and~$P'$ collect all terms such that~$P$ does not contain the monomials~$z$ and $z^3$, while~$P'$ does not contain the monomials $zy_1^2$ and~$zy_2^2$. Without loss of generality, we also assume that $P'$ does not contain the monomials~$z$ and $z^3$ either, as these would merely cause shifted values for $\alpha_1$ and $\alpha_2$ in the argumentation and results that follow, cf. \eqref{eq:gh-directed-uniform} and below. Note that there is only one coefficient $\beta\in\R$ in~$\go{3}$ due to the symmetry in~$y_1$ and~$y_2$.
    
    Let~$E_0$ denote the set of true $2$-to-$1$ hyperedges whose head contains the chosen vertex~$k$. Moreover, let~$E_1$ and~$E_2$ denote the sets of degenerate hyperedges whose head and tail contain~$k$ and whose tail contains one of the other neighbors resulting in a $y_1$- or a $y_2$-influence, respectively. 
    Then the dynamics of the vertex~$k$ in focus is governed by 
    \begin{equation}
        \label{eq:gh-directed-uniform}
        \begin{split}
            &F(z) 
            + \sum_{e\in E_0} \go{3}(z;y_1,y_2) 
            + \sum_{e\in E_1} \go{3}(z;z,y_1) 
            + \sum_{e\in E_2} \go{3}(z;z,y_2) \\
            &\quad = F(z) 
            + \Pi \go{3}(z;y_1,y_2) 
            + \Phi \go{3}(z;z,y_1) 
            + \Psi \go{3}(z;z,y_2) \\
            &\quad= \alpha_1z+\alpha_2z^3 + \Pi\beta (zy_1^2+zy_2^2) + \Phi\beta (z^3+zy_1^2) 
            + \Psi\beta (z^3+zy_2^2)
            + Q(z,y_1,y_2) \\
            &\quad = \alpha_1z
            + (\alpha_2 + \beta (\Phi+\Psi))z^3 
            + \beta(\Pi+\Phi)zy_1^2 
            + \beta(\Pi+\Psi)zy_2^2
            + Q(z,y_1,y_2),
        \end{split}
    \end{equation}
    where~$\Pi=|E_0|, \Phi=|E_1|, \Psi=|E_2|$ and~$Q(z,y_1,y_2)$ is a polynomial that does not contain the monomials~$z$,~$z^3$, $zy_1^2$ and $zy_2^2$. 
    In particular, $\beta(\Pi+\Phi)zy_1^2 + \beta(\Pi+\Psi)zy_2^2$ cannot be equal to $bzy_1^2+czy_2^2$ for \emph{arbitrary} $b, c\in\R$, since $\Pi, \Phi, \Psi$ are integers. 
    
    For suitable $\Pi, \Phi, \Psi$, we can however chose suitable values for $\alpha_1, \alpha_2, \beta \in\R$ to satisfy the sufficient conditions for the emergence of the Gucken\-heimer--Holmes cycle (cf.~below \eqref{eq:gh_cubic})---in fact, we always need $\alpha_1=1$ and thus abbreviate $\alpha=\alpha_2$. 
    For example, assume $\Pi=0, \Phi=1, \Psi=2$. Then we choose
    \begin{align*}
        F(z)            &= z+\frac{2}{5}z^3, \\
        \go{3}(z;y_1,y_2)   &= -\frac{7}{30} zy_1^2-\frac{7}{30}zy_2^2,
    \end{align*}
    that is, $\alpha=\frac{2}{5}$ and $\beta=-\frac{7}{30}$, and $P\equiv0, P'\equiv0$. Then, 
    \begin{align*}
        &F(z) 
            + \sum_{e\in E_1} \go{3}(z;y_1,y_2) 
            + \sum_{e\in E_2} \go{3}(z;z,y_1) 
            + \sum_{e\in E_3} \go{3}(z;z,y_2) \\
        &\quad= z-\frac{3}{10}z^3-\frac{7}{30}zy_1^2-\frac{7}{15}zy_2^2.
    \end{align*}
    This equals $f(z;y_1,y_2)$ in \eqref{eq:GH-F} with $a=-\frac{3}{10}, b=-\frac{7}{30}, c=-\frac{7}{15}$. 
    In particular, these parameters satisfy $a+b+c=-1, -\frac{1}{3}<a<0, c<a<b<0$ so that the Guckenheimer--Holmes cycle exists for this governing function. 
    
    It remains to be shown that there is indeed a configuration of hyperedges, such that $\Pi=0, \Phi=1, \Psi=2$ for all three vertices. In fact, choosing
    \begin{align*}
        \HH = \HHo{3} = \{ &\he{1,3}{1}, \he{1,2}{2}, \he{2,3}{3}, \\
            & \he{1,3}{1,3}, \he{1,2}{1,2}, \he{2,3}{2,3} \},
    \end{align*}
    we obtain a hypergraph in which no vertex is in the head of a true $2$-to-$1$ hyperedge, and each vertex receives one degenerate input from its next neighbor and two degenerate inputs from its second neighbor. In total, there are only~$20$ configurations of~$\Pi, \Phi, \Psi$ that can be realized by a $3$-uniform hypergraph and that allow for the realization of the Guckenheimer--Holmes cycle. A full list together with example hypergraphs can be found in \Cref{tab:ap-gh-uniform} in \Cref{sec:app-gh-3-uniform}.
\end{proof}

Finally, we turn to $4$-uniform hypergraphs.

\begin{thr}
    The Guckenheimer--Holmes system \eqref{eq:gh_cubic} cannot be realized as a network dynamical system on a $4$-uniform hypergraph.
\end{thr}
\begin{proof}
    All hyperedges in a $4$-uniform hypergraph on three vertices are of the form $\he{1,2,3}{k}, \he{1,2,3}{k,l}$, or $\he{1,2,3}{1,2,3}$. 
    Thus, the state of the $k$th vertex evolves according to
    \[ \dot{x}_k=F(x_k)+ \sum_{\substack{e\in\HH \\ k\in H(e)}} \go{4}(x_k;x_1,x_2,x_3). \]
    Due to the symmetry properties of~$\go{4}$, the right hand side is invariant under exchanging~$x_i$ and~$x_j$ for $i,j\ne k$. As we established before, this cannot realize the cubic Guckenheimer--Holmes system~\eqref{eq:gh_cubic}.
\end{proof}

\subsection{Structural Stability and Generic Dynamics on Hypergraphs}
So far, we have focused on explicitly realizing the cubic Gucknheimer-Holmes system as dynamical systems on different hypergraphs.
However, the classical result in~\cite{Guckenheimer.1988} is much stronger: 
It states that the heteroclinic cycle not only exists for the particular cubic vector field given but also for any perturbation of the vector field that preserves the symmetry properties.
Specifically, there exists an open subset of the set of all vector fields of the form
\begin{equation}
    \label{eq:gh-sym1}
    \begin{pmatrix}
        f(x_1,x_2,x_3) \\
        f(x_2,x_3,x_1) \\
        f(x_3,x_1,x_2)
    \end{pmatrix},
\end{equation}
with~$f$ satisfying the symmetry condition
\begin{equation}
    \label{eq:gh-sym2}
    f(x_1,x_2,x_3)=-f(-x_1,x_2,x_3)=f(x_1,-x_2,x_3)=f(x_1,x_2,-x_3)
\end{equation}
for which the Guckenheimer--Holmes cycle exists.

This has several consequences that relate to structural stability.
First, for any of the given hypergraphs above where the Guckenheimer--Holmes cycle could be realized it is actually \emph{stable under perturbations of the coupling functions} (as long as it preserves the symmetry properties). For example, perturbations of the coupling function by weak higher order monomials does not destroy the cycle.
Second, if we allow for distinct coupling functions for each edge, then we have persistence of the heteroclinic cycle under \emph{structural perturbations of the hypergraph} (for example, adding an edge with a coupling function that is uniformly bounded and small) as long as the symmetry properties of the vector field are preserved.
Third, one can get existence for the class of hypergraphs that contain all possible hyperedges as shown in the following statement. 

\begin{thr}
    The Guckenheimer--Holmes {cycle} emerges in network dynamical systems on a hypergraph on three vertices with \emph{all} possible hyperedges.
\end{thr}
\begin{proof}
    A generic hypergraph dynamical system on three vertices with homogeneous coupling in every order is a system in which \emph{all} hyperedges are present and the coupling functions are generic in the sense that their formal series expansions~\eqref{eq:expansion} contain generic coefficients. 
    Since all hyperedges are present and there is only one coupling function per hyperedge order, the system is necessarily equivariant under arbitrary permutations of the three vertices.
    In particular, it is a special case of~\eqref{eq:gh-sym1}, 
    which are precisely the ones that are equivariant under cyclic permutations. 
    If we additionally assume that the coupling functions satisfy
    \begin{align*}
        F(z)                &= -F(-z) \\
        \go{2}(z;y_1)         &= -\go{2}(-z;y_1)= \go{2}(z;-y_1) \\
        \go{3}(z;y_1,y_2)       &= -\go{3}(-z;y_1,y_2) = \go{3}(z;-y_1,y_2) = \go{3}(z;y_1,-y_2) \\
        \go{4}(z;y_1,y_2,y_3)     &= -\go{4}(-z;y_1,y_2,y_3) = \go{4}(z;-y_1,y_2,y_3) \\
                            &= 
                            \go{4}(z;y_1,-y_2,y_3) = \go{4}(z;y_1,y_2,-y_3)
    \end{align*}
    any sum of them also satisfies~\eqref{eq:gh-sym2}. 
    This holds even if the internal variable~$z$ is also one of the coupling variables. In particular, we assume that the formal series expansions~\eqref{eq:expansion} contain only terms that are simultaneously of odd degrees in the internal variable and of even degrees in the input variables. 
    Note that for~$\go{4}$ we have $y_j=z$ for some~$j$, as there are only three different variables and no variable can be a coupling variable twice.
    
    Thus, under these assumptions a small enough perturbation of any of the cubic Guckenheimer--Holmes systems we were able to construct on a hypergraph by a generic hypergraph dynamical system preserves the heteroclinic cycle. In fact, this implies that also the hypergraph with all hyperedges supports the Guckenheimer--Holmes cycle.
\end{proof}

\begin{remk}
    Note that the same argumentation yields the same result for any hypergraph that guarantees the cyclic equivariance~\eqref{eq:gh-sym1}. 
    Heuristically speaking, this is satisfied if and only if the different types of couplings are distributed cyclically over the three vertices with the types being
    \begin{itemize}
        \item classical pairwise edges,
        \item true $2$-to-$1$ couplings,
        \item degenerate hyperedges of order three that realize a $1$-to-$1$ coupling from the right neighbor,
        \item degenerate hyperedges of order three that realize a $1$-to-$1$ coupling from the left neighbor,
        \item and inputs by all three vertices (here the precise structure is not important, e.g., $\he{1,2,3}{1,2,3}$ yields the same terms in the equations of motions as the combination of~$\he{1,2,3}{1}$ and~$\he{1,2,3}{2,3}$.
    \end{itemize}
\end{remk}

\section{The Field Cycle for Dynamics on Directed Hypergraphs}
\label{sec:oscar-directed}
\noindent
Now we turn to the Field cycle as an example of the more general construction to obtain dynamical systems with prescribed heteroclinic structures. 
Contrary to the previous section, the field cycle has two saddle equilibria in the subspace~$\Delta$ corresponding to full synchrony that are connected by heteroclinic trajectories that lie in two different subspaces that correspond to partially synchronous states. 
As we have seen in \Cref{subsec:oscar-undirected}, undirected hyperedges preserve `too many symmetries' in the equations for the construction to work. 
Similar to the Guckenheimer--Holmes cycle, this is no longer the case if directed hyperedges are considered. 
However, the different setup leads to some significant differences between realizing the Field cycle compared to the Guckenheimer--Holmes cycle.
For example, Field's construction cannot work in $m$-uniform hypergraphs. 
Before investigating the construction in more detail below, we can make several straightforward observations similar to the Guckenheimer--Holmes cycle in \Cref{sec:gh-directed}.

The Field cycle as an example of the construction in~\cite{Aguiar.2011,Field2017,Weinberger.2018} is done for a classical network with two types of couplings (see \Cref{fig:oscar_net}), i.e., two different coupling functions for edges. 
In that regard, it is very similar to the Guckenheimer--Holmes system and many of the general observations made in \Cref{sec:gh-directed} can be made here just as well. 
We can obviously follow the same construction if the hypergraph is a classical network without the assumption of homogeneous coupling in every order---in fact, it was shown in \cite{Field.2015} that the construction can be performed for dyadic networks with additive input structure. 
The same can still be done if we restrict to homogeneous coupling in every order by replacing one type of pairwise couplings with degenerate $2$-to-$1$ hyperedges. 
These allow us to distinguish two different types of pairwise coupling through $\go{2}$ and $\go{3}$. 
On the other hand, if we include hyperedges in a symmetric manner---e.g., all edges, all true $2$-to-$1$ hyperedges etc.---the construction is not possible.

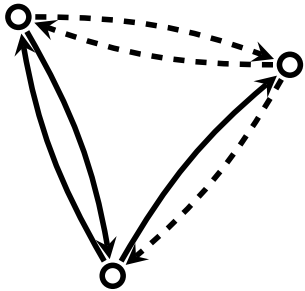
\begin{figure}
    \centering
    \resizebox{!}{.3\textwidth}{
        \begin{tikzpicture}[
	square/.style = {
		regular polygon,
		regular polygon sides=4
	},
	main node/.style={
		line width=1.5pt, 
		circle,
		draw,
		font=\sffamily,
		inner sep=2pt,
		fill=white
	},
	second node/.style={
		line width=1.5pt, 
		square, 
		draw, 
		font=\sffamily,
		rounded corners, 
		inner sep=2pt
	},
	edge/.style={
		-stealth,
		shorten >=1pt,
		shorten <=1pt,
		line width=1.5pt
	},
	hyperedge/.style={
		Round Cap-{Triangle[length=3mm, width=2mm]},
		line width=5pt
	}
	]
	\node[rotate=50, regular polygon, regular polygon sides=3, minimum width=3cm] (tr) at (0,0) {};
	
	\node[main node] (1) at (tr.corner 1) {};
	\node[main node] (2) at (tr.corner 2) {};
	\node[main node] (3) at (tr.corner 3) {};
	
	\path[line width=1.5pt]
	(2) edge [edge, bend left=10] (1)
	(3) edge [edge, bend left=10, dashed] (1)
	
	(1) edge [edge, bend left=10] (2)
	(3) edge [edge, bend left=10, dashed] (2)
	
	(1) edge [edge, bend left=10, dashed] (3)
	(2) edge [edge, bend left=10] (3)
	;
\end{tikzpicture}%
	}%
    \caption{The $3$-vertex graph with two edge types that supports the construction in \cite{Weinberger.2018}.}
    \label{fig:oscar_net}
\end{figure}

\subsection{Preliminary Observations}
Recall that the construction of the Field cycle crucially depends on the (partial) synchrony subspaces that are dynamically invariant for the given network independent of the specific governing functions. 
In a network with three vertices there are four synchrony subspaces 
$\Delta=\{x_1=x_2=x_3\}$, 
$S_3=\{x_1=x_2\}$, 
$S_2=\{x_1=x_3\}$, 
$S_1=\{x_2=x_3\}$. 
If the intrinsic node dynamics are one-dimensional, as we assume here, the first subspace is one-dimensional while all other ones are two-dimensional. 
We require that the fully synchronous subspace is robust, which requires some form of homogeneity in the hypergraph and the equations of motion as summarized in the following statement.

\begin{lem}
    \label{lem:oscar-fullsynch}
    Consider system~\eqref{eq:hypernetdyn} as a network dynamical system on a hypergraph with three vertices. Assume that the coupling is homogeneous in every order. 
    The fully synchronous subspace~$\Delta$ is dynamically invariant for each choice of functions $F, \go{2}, \go{3}, \dotsc$ if and only if the number of hyperedges of order~$m$ targeting a given vertex is the same for each vertex.
\end{lem}
\begin{proof}
    Assume dynamical invariance of $\Delta = \{x_1 = x_2 = x_3\}$. Then for any point $\x=(z,z,z)^\tr\in\Delta$, the right hand sides of~\eqref{eq:hypernetdyn} must be equal for each~$k$. 
    Write $N(m; k):=\# \left\{e\in \HHo{m} \mid k\in H(e) \right\}$ for the number of hyperedges of order~$m$ whose heads contain vertex~$k$.
    Substituting~$\x$ into the system we have
    \begin{equation}
        \label{eq:fullsynch}
        \begin{split}
        \dot z &= F(z) + \sum_{\substack{e\in \HHo{2} \\ k\in H(e)}} \go{2} (z; z) + \sum_{\substack{e\in \HHo{3} \\ k\in H(e)}} \go{3} (z; z, z) + \dotsb\\
            & = F(z) + N(2; k)\go{2} (z; z)
            +N(3; k)\go{3} (z; z, z) + \dotsb.
            \end{split}
    \end{equation}
    This expression is independent of~$k$ if and only if~$N(m; k)$ are independent of~$k$, which proves the lemma.
\end{proof}
\begin{remk}
    The result is not restricted to the case $N=3$. The same proof works for arbitrary values of $N$.
\end{remk}
\begin{remk}
    We will see below, that it is possible to construct the Field cycle for hypergraphs without degenerate couplings and homogeneous coupling to every order (cf. \Cref{subsec:oscar-nondegenerate}). We will make this assumption in the remainder of this section.
\end{remk}

The goal of the construction is to generate hetero\-clinic connections in partially synchronous spaces between two hyperbolic equilibria in the fully synchronous subspace~$\Delta$. 
In order for such a heteroclinic connection to be possible, equilibria in~$\Delta$ need both a stable and an unstable direction outside of the fully synchronous subspace. 
In particular, the construction relies on the existence of two invariant partially synchronous subspaces in addition to the fully synchronous subspace. 
We state the following necessary condition.
\begin{lem}
    \label{lem:oscar-partsynch}
    There are no local obstructions to the construction of the Field cycle if and only if in addition to the fully synchronous subspace there are precisely two partially synchronous dynamically invariant subspaces.
\end{lem}
\begin{proof}
    Since the construction relies on the presence of two partially synchronous dynamically invariant subspaces, it is clear that it is obstructed when the network structure allows for only one of the partially synchronous subspaces. 
    On the other hand, whenever the network allows for all three partially synchronous subspaces, the procedure always fails due to a double eigenvalue of the linearization at a fully synchronous point. 
    In fact, any $3\times3$ matrix leaving $\Delta, S_3, S_2, S_1$ invariant is automatically of the form
    \begin{equation*}
        \begin{pmatrix}
            \alpha   & \beta         & \gamma \\
            \delta   & \alpha + \beta - \delta & \gamma \\
            \delta   & \beta         & \alpha + \gamma - \delta
        \end{pmatrix}.
    \end{equation*}
    This matrix has an eigenvalue $\alpha+\beta+\gamma$ with eigenvector $(1,1,1)^\tr$ as well as a double eigenvalue $\alpha-\delta$ with eigenvectors $(1,-\frac{\delta}{\beta},0)^\tr,(1,0,-\frac{\delta}{\beta})^\tr$. 
    In particular, no steady state in the fully synchronous subspaces can have a stable direction in one partial synchrony space and an unstable direction in another. 
    This prevents the heteroclinic cycle to be realized. 

    Any $3\times3$ matrix leaving~$\Delta, S_3, S_2$ invariant but not~$S_1$ is of the form
    \begin{equation*}
        \begin{pmatrix}
            \alpha   & \beta         & \gamma \\
            \delta   & \alpha + \beta - \delta & \gamma \\
            \epsilon   & \beta         & \alpha + \gamma - \epsilon
        \end{pmatrix}.
    \end{equation*}
    This matrix has eigenvalues $\alpha+\beta+\gamma, \alpha-\epsilon, \alpha-\delta$ with corresponding eigenvectors $(1,1,1)^\tr, (1,1, -\frac{\beta+\epsilon}{\gamma})^\tr,(1,-\frac{\gamma+\delta}{\beta},1)^\tr$ which poses no obstructions to the realization. Analogous observations can be made for the other two combinations of the two-dimensional subspaces.
\end{proof}
\noindent
This yields a necessary condition as to whether the construction of the Field cycle works that is easy to check: 
Does the network structure allow for the fully synchronous plus exactly two partially synchronous subspaces to be dynamically invariant independent of the governing functions?

\begin{remk}
    It can readily be seen that the dynamical invariance of a (partial) synchrony subspace is independent of the coupling functions being nodespecific or nodeunspecific. Consider two coupling functions $\go{m}(x_i;\cdot)$ and $\go{m}(x_j;\cdot)$ that target vertices $i$ and $j$ respectively. If, in a (partial) synchrony subspace to be checked, $i$ and $j$ are synchronous, both functions receive the same first argument. The same is true, if both functions depend trivially on their first argument.
\end{remk}

\begin{remk}
    The question which (partial) synchrony subspaces are dynamically invariant independent of the specific governing functions boils down to the combinatorial problem of finding so-called \emph{balanced partitions} in the network. These have famously been introduced and algebraically studied in \cite{Field.2004,Golubitsky.2006,DeVille.2015,Nijholt.2020} for coupled cell networks. 
    More recently, first advances have been made to generalise balanced partitions to network dynamical systems with higher-order interactions~ \cite{Aguiar2020,Salova.2021b,Nijholt.2022c,vonderGracht.2023}. For instance in order for vertices~$1$ and~$2$ to synchronize, the partition $\{1,2\}, \{3\}$ needs to be balanced, meaning that any vertex in an element of the partition needs to receive the same `kind' of inputs from any element of the partition as any other vertex in that same element. So assume there is a hyperedge $\he{2,3}{1}$. For the partition to be balanced, vertex~$2$ needs to receive a hyperedge with one input from the same element in the partition and one from outside. This leaves two options $\he{1,3}{2}$ and $\he{2,3}{2}$. In a similar manner, one may then construct networks with hyperedges of order $3$ that have precisely two balanced partitions.
    In order to keep the presentation as straightforward as possible, we will not use this method in the remainder of this section. Instead, we use elementary combinatorics to enumerate possible hypergraphs and check the corresponding equations of motions for invariant (partial) synchrony subspaces.
\end{remk}

\begin{remk}
    Finally, we remark that the construction for the heteroclinic dynamics in the hypergraphs presented or mentioned in this section might bear subtle differences from the one in~\cite{Field.2015} (which has been specified for the three node case in~\cite{Weinberger.2018}). 
    However, it does always follow the same lines. In particular, it can always be arranged that two trajectories that connect two fully synchronous equilibria and locally coincide with their respective one-dimensional unstable manifolds do not intersect when projected to the partial synchrony subspaces identified with~$\R^2$. Hence, this kind of global obstructions to the construction cannot occur (see \Cref{subsec:oscar-nondegenerate} below for an example and~\cite{Aguiar.2011,Field.2015} for details).
\end{remk}

\subsection{No Degenerate Hyperedges}
\label{subsec:oscar-nondegenerate}
The realization of the Field cycle is possible in network dynamical system on a hypergraph that contains only non-degenerate hyperedges, assuming that the coupling is homogeneous in every order.
The reasoning is similar to \Cref{subsubsec:gh-edge+2-to-1}.
The combination of a classical edge and a true $2$-to-$1$ hyperedge allows a targeted vertex to distinguish between the inputs of its two neighbors. 
We present the details for illustration of the restrictions that have to be checked for the construction to work.

\begin{thr}
    There is a dynamical system on a hypergraph with three vertices and edges
    \begin{equation}
        \label{eq:field-H1}
        \begin{split}
            \HHo{2} &= \{\he{2}{1}, \he{1}{2}, \he{2}{3}\},\\
            \HHo{3} &= \{\he{2,3}{1}, \he{1,3}{2}, \he{1,2}{3}\}, \text{ and}\\
            \HHo{m} &= \emptyset \text{ whenever } m >3
        \end{split}
    \end{equation}
    that realizes the Field cycle.
\end{thr}
\begin{proof}
    Dynamics on the hypergraph on three vertices with edges~\eqref{eq:field-H1} evolve according to
    \begin{equation}
        \label{eq:oscar-nondegenerate}
        \begin{split}
            \dot{x}_1   &= F(x_1) + \go{2} (x_1; x_2) +  \go{3} (x_1;x_2,x_3) \\
            \dot{x}_2   &= F(x_2) + \go{2} (x_2; x_1) + \go{3} (x_2;x_1,x_3) \\
            \dot{x}_3   &= F(x_3) + \go{2} (x_3; x_2) + \go{3} (x_3;x_1,x_2).
        \end{split}
    \end{equation}
    This system has dynamically invariant subspaces~$\Delta$, $S_3$, and~$S_2$; however, $S_1$~is not dynamically invariant. 
    Thus, there are no local obstructions. 
    
    We now look at the local situation in more detail. The linearization of~\eqref{eq:oscar-nondegenerate} at a fully synchronous equilibrium $\mathbf{x}=(z,z,z)^\mathsf{T}\in\Delta$ is of the form
    \begin{equation*}
        \begin{pmatrix}
            \alpha + \beta + \gamma & \delta + \epsilon & \epsilon \\
            \delta + \epsilon & \alpha + \beta + \gamma & \epsilon \\
            \epsilon & \delta + \epsilon & \alpha + \beta + \gamma \\
        \end{pmatrix},
    \end{equation*}
    where
    \begin{subequations}\label{eq:ABGDE}
    \begin{align}
        \alpha &= F'(z), \\
        \beta &= \partial_1\go{2}(z;z), \\
        \gamma &= \partial_1 \go{3}(z;z,z), \\
        \delta &= \partial_2 \go{2}(z;z), \\
        \epsilon &= \partial_2 \go{3}(z;z,z) =  \partial_3 \go{2}(z;z,z).
    \end{align}
    \end{subequations}
    This matrix has eigenvalues and corresponding eigenvectors
    \begin{align*}
            \lambda_1&=\alpha+\beta+\gamma+\delta+2\epsilon,    & v_1 &= (1,1,1)^\tr, \\
            \lambda_2&=\alpha+\beta+\gamma-\delta-\epsilon,  & v_2 &= \left(1,1,-\frac{\delta+2\epsilon}{\delta+       \epsilon}\right)^\tr\\
            \lambda_3&=\alpha+\beta+\gamma-\epsilon, &     
            v_3 &= \left(1,-\frac{\delta+2\epsilon}{\epsilon},1\right)^\tr.
    \end{align*}
    For two different fully synchronous equilibria~$\mathbf{p}$, $\mathbf{q}$ it can then be arranged that they each have a one-dimensional unstable manifold in different partially synchronous subspaces by choosing suitable parameter values $\alpha, \dotsc, \epsilon\in\R$. Furthermore, projecting~$S_3$ and~$S_2$ to~$\R^2$, it can be arranged that the slopes of the corresponding eigenlines have opposite signs---this requires $(\delta+\epsilon)\epsilon<0$ and both factors to have different signs at both equilibria. Then two trajectories from~$\mathbf{p}$ to~$\mathbf{q}$ and vice versa that coincide with the corresponding unstable manifolds locally can be assumed not to intersect when projected onto~$\R^2$ (cf.~Figure~18 in \cite{Aguiar.2011}). Since these projections are related by the network structure, there are no global obstructions to the simultaneous existence of both connecting trajectories. Hence, one can choose suitable coupling functions realizing the Field cycle (see~\cite{Aguiar.2011,Field.2015} and \Cref{rem:one-sided} below for additional details).
\end{proof}

\begin{remk}
	\label{rem:one-sided}
	In network dynamical systems, projections onto different (dynamically invariant) subspaces are typically related by the network structure. For example, it may happen that the function governing the dynamics of one coordinate in one projection equals that of a different coordinate in the other projected system. If the local properties furthermore force connecting trajectories to intersect when projected onto the identified lower dimensional subspaces (cf.~Figure~18 in \cite{Aguiar.2011}) one cannot construct connecting trajectories independently of each other. In such a situation the construction of the Field cycle can be prohibited entirely by these global features, we say it is \emph{globally obstructed}. In the previous proof however we may arrange for the connecting trajectory from~$\mathbf{p}$ to~$\mathbf{q}$ to lie on one side of the (dynamically invariant) diagonal in the projection to~$\R^2$ and for the connecting trajectory from~$\mathbf{q}$ to~$\mathbf{p}$ to lie on the other side. 
 Thus, no global obstructions occur. This approach realizes the heteroclinic connections without additional control over the other half of respective stable and unstable manifolds.
\end{remk}

The observation in \Cref{eq:oscar-nondegenerate} does not change if the coupling is nodeunspecific. Note these network dynamics on a hypergraph are different from the one in \Cref{subsubsec:gh-edge+2-to-1} which allowed for the Guckenheimer--Holmes cycle. A similar argument to this section can be applied to dynamics on a hypergraph with hyperedges of order four, as these are necessarily degenerate and model $2$-to-$1$ couplings.

\subsection{Uniform Hypergraphs}
While the Guckenheimer--Holmes cycle may be realized in network dynamical systems on $m$-uniform hypergraphs (cf.~\Cref{subsec:gh-directed-uniform}), they do not provide enough degrees of freedom to realize the Field cycle. 
\begin{thr}
    The Field cycle cannot be realized in a network dynamical system on an $m$-uniform hypergraph on three vertices with homogeneous coupling.
\end{thr}
\begin{proof}
    The following observations can be made immediately: $2$-uniform hypergraphs are classical networks which we have discussed in the beginning of \Cref{sec:oscar-directed}. 
    On the other hand, $3$-uniform hypergraphs without non-degenerate hyperedges as well as all $4$-uniform hypergraphs are symmetric in the inputs and therefore they do not allow for the construction (see \Cref{subsec:gh-directed-uniform}).

    The situation for $3$-uniform hypergraphs with degenerate hyperedges is more subtle. Without specifying the hyperedges that are present in the hypergraph, the equations of motion are
    \begin{equation}
        \label{eq:oscar-directed-uniform}
        \begin{split}
            \dot{x}_1   &= F(x_1) + \Pi_1\go{3}(x_1;x_2,x_3) + \Phi_1\go{3} (x_1;x_1,x_2) + \Psi_1\go{3}(x_1;x_1,x_3) \\
            \dot{x}_2   &= F(x_2) + \Pi_2\go{3}(x_2;x_1,x_3) + \Phi_2\go{3} (x_2;x_2,x_3) + \Psi_2\go{3}(x_2;x_1,x_2) \\
            \dot{x}_3   &= F(x_3) + \Pi_3\go{3}(x_3;x_1,x_2) + \Phi_3\go{3} (x_3;x_1,x_3) + \Psi_3\go{3}(x_3;x_2,x_3).
        \end{split}
    \end{equation}
    Here, the (nonnegative) integers $\Pi_k, \Phi_k, \Psi_k$ for $k=1,2,3$ count the number of true hyperedge inputs as well as degenerate inputs from the left and right neighbors respectively that vertex~$k$ receives. 
    Due to \Cref{lem:oscar-fullsynch}, robustness of the fully synchronous subspace~$\Delta$ requires $\Pi_1+\Phi_1+\Psi_1=\Pi_2+\Phi_2+\Psi_2=\Pi_3+\Phi_3+\Psi_3 =: \Xi$. 
    Furthermore, following \Cref{lem:oscar-partsynch}, the construction requires robustness of precisely two partial synchrony subspaces. 
    System~\eqref{eq:oscar-directed-uniform} is symmetric in $x_1,x_2,x_3$ and, without loss of generality, we assume~$S_3$ and~$S_2$ to be dynamically invariant. 
    Substituting these assumptions into~\eqref{eq:oscar-directed-uniform} we additionally obtain $\Xi-\Phi_1=\Pi_2+\Phi_2$ and $\Xi-\Phi_3=\Pi_1+\Phi_1$.
    
    With the assumptions and $\alpha,\beta, \dotsc, \epsilon$ as in~\eqref{eq:ABGDE}, the linearization of the right hand side of~\eqref{eq:oscar-directed-uniform} at a fully synchronous equilibrium has the form
    \begin{equation*}
    	\begin{pmatrix}
    		\alpha + \Xi\beta + (\Xi-\Pi_1)\gamma & (\Xi+\Pi_1-\Pi_2-\Phi_2)\gamma & (\Pi_2+\Phi_2)\gamma \\
    		(\Xi-\Phi_2)\gamma & \alpha + \Xi\beta + (\Xi-\Pi_2)\gamma & (\Xi+\Phi_2)\gamma \\
    		(-\Pi_1+\Pi_2+\Phi_2+\Pi_3)\gamma & (\Xi+\Pi_1-\Pi_2-\Phi_2)\gamma & \alpha + \Xi\beta + (\Xi-\Pi_3)\gamma \\
    	\end{pmatrix},
    \end{equation*}
    This matrix has eigenvalues with corresponding eigenvectors
 \begin{align*}
        \lambda_1&=\alpha+\Xi\beta+2\Xi\gamma,
            & v_1 &= (1,1,1)^\tr, \\
            \lambda_2&=\alpha+\Xi\beta+(\Xi-\Pi_2-\Phi_2-\Pi_3)\gamma,  & 
            v_2 &= \left(1,1,-\frac{\Xi+\Pi_3}{\Pi_2+\Phi_2}\right)^\tr\\            
            \lambda_3&=\alpha+\Xi\beta+(-\Pi_1+\Phi_2)\gamma, &     
            v_3 &= \left(1,-\frac{\Xi+\Pi_2}{M+\Pi_1-\Pi_2-\Phi_2},1\right)^\tr.
    \end{align*}    
    One readily observes
    \begin{align*}
    	\lambda_2&=\lambda_1 + (-\Xi-\Pi_2-\Phi_2-\Pi_3)\gamma, \\
    	\lambda_3&=\lambda_1 + (-2\Xi-\Pi_1+\Phi_2)\gamma.
    \end{align*}
    In both equations, the integer coefficient of~$\gamma$ is negative---recall that $\Phi_2<\Xi$. 
    
    Consider the case that $\lambda_1$ and $\gamma$ have opposing signs, $\lambda_1\gamma\le0$. Then, we either have $\lambda_2,\lambda_3\le0$ or $\lambda_2,\lambda_3\ge0$, i.e., the two eigenvalues have the same sign. In this situation, the fully synchronous equilibrium is not a saddle with a partially synchronous stable and unstable eigendirection which prohibits the construction. On the other hand, if~$\lambda_1$ and~$\gamma$ have the same sign, $\lambda_1\gamma\ge0$, either~$\lambda_2$ or~$\lambda_3$ can be arranged to be positive (or even both). However, note that
    \[ \lambda_2-\lambda_3 = (\Xi+\Pi_1-\Pi_2-2\Phi_2-\Pi_3)\gamma. \]
    The sign of the integer coefficient $(\Xi+\Pi_1-\Pi_2-2\Phi_2-\Pi_3)$ determines which of the two eigenvalues is larger. This coefficient, however, is fixed for a given hypernetwork and fully determined by its hyperedges. In particular, if precisely one of the two eigenvalues is arranged to be positive it can only be either~$\lambda_2$ or~$\lambda_3$ independent of the precise value of $\gamma$. As a result, any saddle equilibrium with one-dimensional unstable subspace has this unstable subspace contained in the same partial synchrony subspace. This is a \emph{global} obstruction to the realization of the Field cycle which requires the unstable directions of two fully synchronous saddle equilibria to be contained in the opposite partial synchrony subspaces (cf.~\Cref{subsubsec:fields-cycle}). 
    
    Finally, note that the observations do not change in the case of nodeunspecific coupling. In fact, nodeunspecific coupling does not alter the derivation of the necessary assumptions on the integer coefficients and is reflected by $\beta=0$ in the linear stability analysis.
\end{proof}


\section{Heteroclinic Cycles for Hypergraphs with \texorpdfstring{$N>3$}{N>3} Vertices}
\label{sec:LargerN}
\noindent
We focused so far on dynamics on hypergraphs with $N=3$ vertices since a three-dimensional state space is natural for the Guckenheimer--Holmes and Field heteroclinic cycles. 
Indeed, for general constructions of heteroclinic structures one is typically interested in the smallest dimension such that the heteroclinic structure can be embedded into phase space. 
For the Field cycle with $N-1=2$ connections this is a phase space of $N=3$ dimensions.
The more general construction underlying the Field cycle (cf.~Section~\ref{subsubsec:fields-cycle}) considers an $N$-dimensional phase space to realize heteroclinic structures involving $N-1$~distinct connections.

We now briefly discuss the converse question whether we can realize the Gucken\-heimer--Holmes and Field cycles in networks with $N>3$ nodes. 
In general, more nodes give more flexibility in the construction (especially in the most general case where the coupling function may depend on the edge itself), so we do not aim to be exhaustive. 
Rather, we concentrate on instructive examples that illustrate some of the opportunities and pitfalls.

\subsection{Heteroclinic Structures on Robust Synchrony Subspaces for Hypergraphs with \texorpdfstring{$N$}{N}~Vertices}

\newcommand{\rel}{\mathord{\sim}}
\newcommand{\SN}{\mathbf{S}_N}

Consider a network dynamical system on a hypergraph with vertices $\Vc=\sset{1, \dotsc, N}$. 
If~$P=\sset{P_s\subset\Vc}_s$ is a partition of~$\Vc$, that is, $\bigcup_s P_s = \Vc$ and $P_s\cap P_r = \emptyset$ if $s\neq r$, the associated synchrony subspace is $\Delta_{P} = \{x_j = x_k: j,k\in P_s\}$; these are states where the nodes in each~$P_s$ are synchronized. 
Such a synchrony subspace is a \emph{robust synchrony subspace} if it is dynamically invariant for all coupling functions.

\begin{ex}\label{ex:Symmetric}
    Symmetries induce robust synchrony subspaces. 
    Consider the complete undirected hypergraph~$\Kc$ on~$N$ vertices where all edges are present and assume that the coupling functions $G_e = G^{(m)}$ are homogeneous in each order. 
    The resulting network dynamical systems are $\SN$-symmetric where the symmetric group~$\SN$ acts on~$\Vc$ by permuting the node indices. 
    This implies that any partition of~$\Vc$ is a robust synchrony subspace~\cite{Golubitsky2002}.
\end{ex}

More generally, robust synchrony subspaces relate to \emph{quotients} of dynamical systems on hypergraphs: 
In suitable setups---see~\cite{Aguiar2020} and \cite{vonderGracht.2023} 
for complementary approaches---we have that the dynamics on robust synchrony subspaces correspond to dynamics on the corresponding \emph{quotient hypergraph} one obtains when identifying the synchronized nodes. 
For more details we refer to the references above. 
The main observation is that the dynamics on robust synchrony subspaces of network dynamical systems on hypergraphs~$\Hc$ with~$N$ vertices relate to the dynamics of quotient hypergraphs with $N'<N$ vertices.

This observation can be used to make statements about whether heteroclinic dynamics can occur in network dynamical systems on hypergraphs with $N>3$ nodes. 
We give one negative and one positive result.

\begin{cor}
    Consider dynamics on the fully symmetric hypergraph~$\Kc$ as in Example~\ref{ex:Symmetric}. No three cluster robust synchrony subspace corresponding to a partition of the vertices in three nonempty sets can contain the Guckenheimer--Holmes or the Field cycle.
\end{cor}

\begin{proof}
    The dynamics on any three-cluster robust synchrony subspace are three-dimensional.
    By assumption, the vector field corresponds to dynamics on an undirected hypergraph on $N=3$ vertices.
    Since neither the Guckenheimer--Holmes nor the Field cycle can be realized for these hypergraphs according to \Cref{thr:gh-undirected,thr:field-undirected}, they also cannot occur on robust invariant subspaces for dynamics on~$\Kc$ with $N$~vertices.
\end{proof}

The second result is immediate from the considerations in \Cref{sec:gh-directed,sec:oscar-directed}.

\begin{cor}
    Consider a network dynamical system on a hypergraph~$\Hc$ with $N>3$~vertices. 
    If the vector field on a three-cluster robust synchrony subspace reduces to a vector field corresponding to a $3$-node network where the Guckenheimer--Holmes or Field cycle can be realized then (generically) the cycle is also realized on a three-dimensional subspace of the $N$-dimensional phase space.
\end{cor}

\begin{remk}
    Since we have defined heteroclinic structures to involve hyperbolic equilibria, one has to ensure that the equilibria are hyperbolic in the full phase space. 
    That is, the eigenvalues of the linearization in the direction transverse to the robust synchrony subspace cannot have zero imaginary part.
\end{remk}

\subsection{Field's Construction for Undirected Hypergraphs on \texorpdfstring{$N$}{N}~Vertices}
Even if the smallest phase space dimension to embed the Field cycle in is $N=3$, one may be interested in generating it in a network dynamical system with $N>3$ vertices. 
The previous subsection showed, that this is possible for directed hypergraphs. 
We will now briefly discuss the undirected case.

Recall from \Cref{thr:field-undirected} that the Field cycle cannot be realized in a dynamical system on an undirected hypergraph on three vertices. 
The proof for this result has two main steps. 
First, we show that the requirements that the realization of the Field cycle imposes on the dynamics forces the coupling to be homogeneous in every order. 
Then, we show that the linearization at two fully synchronous equilibria cannot have the necessary saddle properties to allow for the emergence of reciprocal heteroclinic connections. 
This second part remains true for arbitrary~$N>3$.

\begin{thr}
    \label{thr:field-undirected-large}
    Dynamics on an undirected hypergraph on~$N$ vertices with homogeneous coupling in each order~$m$ does not allow for the realization of the Field cycle.
\end{thr}

\begin{proof}
    As before, the Field cycle consists of two fully synchronous equilibria in the dynamically invariant fully synchronous subspace and heteroclinic connections in dynamically invariant subspaces in which all but one cell are synchronous. 
    Without loss of generality, we assume that the $(N-1)$th and the $N$th cell desynchronize along the connections respectively.

    The linearization of the vector field at $\x=(z,\dotsc,z)^\tr$ is a symmetric matrix $M=(m_{i,j})\in\R^{N\times N}$ if the coupling is undirected and homogeneous in each order~$m$. 
    Furthermore, it leaves the synchrony subspaces
    \begin{align*}
        \Delta&=\{x_1=\dotsb=x_{N-1}=x_N\}\\
        S_{N-1}&=\{x_1=\dotsb=x_{N-2}=x_N\}\\
        S_N&=\{x_1=\dotsb=x_{N-1}\}
    \end{align*}
    invariant. As~$\Delta$ is one-dimensional, its spanning vector $(1,\dotsc,1)^\tr$ is an eigenvector of~$M$. This implies that~$M$ has constant row sum, which due to its symmetry also equals the sum of all columns.

    Note that
    \begin{equation*}
        S_{N-1}=\Delta\oplus \left\langle \begin{pmatrix}0\\\vdots\\0\\1\\0\end{pmatrix}\right\rangle, \qquad
        S_N=\Delta\oplus \left\langle \begin{pmatrix}0\\\vdots\\0\\0\\1\end{pmatrix}\right\rangle. 
    \end{equation*}
    Due to the invariance of these two subspaces, we obtain $M(0,\dotsc,0,1,0)^\tr\in S_{N-1}$ and $M(0,\dotsc,0,1)^\tr\in S_N$. These products, however, are the $N-1$st and the $N$th column of~$M$, respectively. Thus, we have
    \begin{equation*}
        \begin{pmatrix}
            m_{1,N-1}\\
            \vdots \\
            m_{N-2,N-1}\\
            m_{N-1,N-1}\\
            m_{N,N-1}
        \end{pmatrix} = 
        \begin{pmatrix}
            \alpha \\
            \vdots \\
            \alpha \\
            \beta \\
            \alpha
        \end{pmatrix}, \quad \text{and} \quad
        \begin{pmatrix}
            m_{1,N}\\
            \vdots \\
            m_{N-2,N}\\
            m_{N-1,N}\\
            m_{N,N}
        \end{pmatrix} = 
        \begin{pmatrix}
            \gamma \\
            \vdots \\
            \gamma \\
            \gamma \\
            \delta
        \end{pmatrix}
    \end{equation*}
    for some $\alpha,\beta,\gamma,\delta\in\R$. The symmetry of~$M$ implies $m_{N,N-1}=m_{N-1,N}$ so that $\alpha=\gamma$. The constant column sum further gives $\beta=\delta$. As a result, $M$~acts the same on the spanning vectors of both~$S_{N-1}$ and~$S_N$. In particular, any fully synchronous equilibrium can either be fully stable or fully unstable in directions in both of these subspaces so that reciprocal heteroclinic connections are impossible.
\end{proof}

The first part in the proof of \Cref{thr:field-undirected} about realizing the Field cycle for edge-dependent coupling, however, does not necessarily hold for~$N>3$.
Specifically, homogeneity in every order~$m$ is forced for $N=3$ by restricting to synchrony subspaces in which all but one variable are equal; this forces the coupling functions to be equal.
For $N>3$ there are not sufficiently many conditions to deduce that all coupling functions are equal. Rather, the existence of invariant subspaces forces some coupling functions to be equal when all but one coupling argument are equal which does not necessarily imply for them to be equal as functions for arbitrary coupling arguments.
Thus, we cannot exclude the realization of the Field cycle for dynamics on any undirected hypergraph with edge-dependent coupling functions.

A similar argumentation applies to the more general Field construction with $N-1$~connections laid out in \Cref{subsubsec:fields-cycle} on undirected hypergraphs with $N>3$ vertices. 
\Cref{thr:field-undirected-large} implies that already two heteroclinic connections in two different synchrony subspaces in which one node desynchronizes are impossible when the coupling is undirected and homogeneous in each order. 
Thus, in particular larger heteroclinic structures following this construction are impossible. 
On the other hand, by the reasoning above we do not expect the presence of $N-1$ synchrony subspaces in which all but one cell are synchronized to force homogeneity in each order for $N>3$.
Hence, being able to realize more general heteroclinic structures through Field's construction cannot be excluded.

\section{For Which Classes of Network Dynamics on Hypergraphs Can We Realize Heteroclinic Cycles?}
\label{sec:HOI}
\noindent
The framework for network dynamical systems on hypergraphs described in \Cref{Sec:Setup} is sufficiently general to encompass a number of examples discussed in the literature. In the following we will consider explicit examples from the literature and assess---based on our results---whether the class of network dynamical systems on hypergraphs allows to realize the Guckenheimer--Holmes or Field heteroclinic cycle. 
{With the results in the previous section in mind, we restrict our attention again to hypergraphs with $N=3$ vertices.}

Note that these classes of network dynamical systems also restrict the set of coupling functions~$G_e$ associated to each hyperedge~$e$.
Thus, we assess here whether there are obstructions to the realization of either heteroclinic cycles rather than making a statement that there are parameters for which a heteroclinic cycle exists.

\subsection{Dynamics on Undirected Hypergraphs With Different Edge Types}

\newcommand{\ExM}{
\text{\cite{Mulas2020}}
}

In~\cite{Mulas2020}, Mulas and others consider dynamics on an undirected hypergraph~$\Hc^\ExM = (\Vc^\ExM, \Ec^\ExM)$, where the state~$x_k$ of node~$k$ evolves according to
\begin{equation}
    \label{eq:mulas_lit}
    \dot x_k = F(x_k) + \sum_{e\in\Ec: k\in e}G_{k;e}(x_e),
\end{equation}
where~$F$ determines the internal dynamics and~$G_{k;e}$ the target specific coupling function associated to each hyperedge. 
Note that while $\Hc^\ExM$ may be undirected, having a set of coupling functions assigned to each hyperedge corresponds to different edge `types' that effectively introduce directionality (e.g., some of them could vanish).

This general setup in our framework corresponds to a network dynamical system on a directed hypergraph~$\Hc = (\Vc, \Ec)$: 
Each edge $e = \{v_{j_1}, \dotsc, v_{j_m}\}\in\Ec^\ExM$ of order~$m+1$ generates~$m$ edges of~$\Ec$ of the form $(\{v_{j_1}, \dotsc, v_{j_m}\}, \{v_{j_k}\})$; the coupling is nodeunspecific.
While this setup allows to generate the Field cycle (cf.~\Cref{sec:oscar-directed}), the realization of the Guckenheimer--Holmes cycle in such network dynamical systems is not possible (\Cref{thm:NodeunspecificNoGH}).

The main results on synchronization of~\cite{Mulas2020} are stated for a class of coupling functions which are homogenous in both target node and edge order by assuming $G^{(m)}_{k;e}=G^{(m)}$ where~$m$ is the order of~$e$. As the possible directionality in the more general setup is lost, neither the Guckenheimer--Holmes nor the Field cycle can be realized (\Cref{Sec:Undirected}).

\subsection{Dynamics on Undirected Hypergraphs With Directed Coupling}

\newcommand{\ExS}{
\text{\cite{Salova2021a}}
}

Salova and D'Souza, in~\cite{Salova2021a}, consider dynamics on undirected hypergraphs $\Hc^\ExS = (\Vc^\ExS, \Ec^\ExS)$.
For a hyperedge~$e$ let~$|e|$ denote its order.
Now the dynamics of a node~$k\in\Vc^{\ExS}$ depend on the state of all nodes that are incident to it according to
\begin{equation}
    \label{eq:salova_lit}
    \dot x_k = F(x_k) + \sum_{\substack{e\in\Ec^\ExS:k\in e}}
    G^{(|e|)}(x_k; x_{e\sm k}),
\end{equation}
with a coupling function~$G^{(m)}$ that only depends on the order of the hyperedge~$e$---$e\sm k$ is shorthand notation for $e\sm\{k\}$.

Due to how~$G^{(m)}$ depends on the arguments, this dynamics corresponds in our setup to a network dynamical system on a directed hypergraph~$\Hc = (\Vc, \Ec)$ despite~$\Hc^{\ExS}$ being undirected: 
Each (undirected) hyperedge $e = \{v_{j_1}, \dotsc, v_{j_m}\}\in\Ec^\ExM$ generates~$m$ directed hyperedges of~$\Ec$ of order~$m$ that are of the form $(\{v_{j_1}, \dotsc, v_{j_m}\}\sm \{v_{j_k}\}, \{v_{j_k}\})$.
Moreover, the coupling is nodespecific but homogeneous in every hyperedge order.
Thus, network dynamical systems~\eqref{eq:salova_lit} can realize the Guckenheimer--Holmes as well as the Field cycle.

\subsection{Dynamics on Directed Weighted Hypergraphs}

\newcommand{\ExA}{
\text{\cite{Aguiar2020}}
}

Aguiar and coworkers~\cite{Aguiar2020} developed a coupled cell network framework for network dynamics with higher order interactions. 
They consider dynamics on directed hypergraphs~$\Hc^\ExA = (\Vc^\ExA, \Hc^\ExA)$ where each edge~$e\in\Hc^\ExA$ carries weight $w_e\in\R$; while they allow the tail~$T(e)$ to be a multiset, we restrict to the subclass where~$T(e)$ is a set.
The dynamics of node~$k$ is determined by
\begin{equation}
    \label{eq:porto_lit}
    \dot x_k = F(x_k) +
    \sum_{
    \substack{e\in\Hc^\ExA:k\in H(e)}
    }    
    w_{e} G^{(|e|)}\!\left(x_k; x_{T(e)} \right)
\end{equation}
This setup is quite general as the coupling can be nodespecific and allows for the realization of both the Guckenheimer--Holmes cycle as well as the Field cycle.

\subsection{Generalized Laplacian Dynamics on Undirected Hypergraphs}
\newcommand{\ExC}{
	\text{\cite{Carletti2020}}
}

In~\cite{Carletti2020}, Carletti and coauthors consider network dynamics on an undirected hypergraph $\Hc^\ExC = (\Vc^\ExC, \Ec^\ExC)$. 
The evolution of node~$k$ is determined by
\begin{equation}
	\label{eq:carletti_lit}
	\dot x_k = F(x_k) -\epsilon \sum_{e\in\Ec^\ExC: k\in e} \sum_{j\in e \sm k}(|e|-1)\left(G(x_k)-G(x_j)\right)
\end{equation}
for each node $k$, where $\epsilon$ denotes the coupling strength---as before $e\sm k$ is shorthand notation for $e\sm\{k\}$. Due to the specific form of the coupling functions, \eqref{eq:carletti_lit} describes generalized Laplacian dynamics. In fact, it can readily be seen that the internal sum can be considered to be taken over the index set $e$ instead of $e\sm k$. Furthermore, the coefficient depends only on the hyperedge order $m=|e|+1$. Thus, in our framework this dynamics corresponds to an undirected hypergraph $\Hc=(\Vc,\Ec)=\Hc^\ExC$ with nodespecific coupling that is homogeneous in every order given by the coupling functions
\[ \go{m}(x_k;x_e)=-\epsilon(m-2)\sum_{j\in e}\left(G(x_k)-G(x_j)\right) \]
(the number of terms in the sum is independent of the edge~
$e$). In particular, neither the Guckenheimer--Holmes cycle nor the Field cycle can be realized in systems of the form~\eqref{eq:carletti_lit}; cf.~\Cref{Sec:Undirected}.

\subsection{Replicator Equations on Directed Hypergraphs}
A type of network dynamics with higher-order interactions discussed in~\cite{Bairey2016} are replicator equations. 
These are related to generalized Lotka--Volterra equations describing interacting populations which support heteroclinic dynamics~\cite{Afraimovich2004c}. 
Specifically, the evolution of node~$k$ is given by
\begin{equation}
    \label{eq:rep_lit}
    \dot x_k = x_k\left(f_k(\x) - \sum_{j=1}^N x_jf_j(\x)\right)
\end{equation}
where $\x=(x_1,\dotsc,x_N)$ and
\[
f_l(\x) = -x_l + \sum_{r=1}^N a_{lr}x_r + \sum_{r=1}^N \sum_{p=1}^N b_{lrp}x_rx_p + \dotsb.
\]
Note that~\eqref{eq:rep_lit} contains terms that are quadratic or of higher polynomial degree only. As an immediate consequence, the Guckenheimer--Holmes system~\eqref{eq:gh_cubic} cannot be realized by systems of this form. 
Nonetheless, in its most general form, the equations model dynamics on a directed hypergraph with very general nodespecific couplings. 
Therefore, the possibility to realise the Field cycle with systems of the form~\eqref{eq:rep_lit} is to be expected despite the specific form of the equations of motion.

In a related context, consider the variations described in~\cite{Gibbs.2022} where node~$k$ evolves according to
\begin{align}
    \dot x_k &= x_k\left(R_k - \sum_{j=1}^NA_{kj}x_j -\sum_{j,l=1}^NB_{kjl}x_k \right)  \label{eq:gibbs_lit}\\
    \intertext{and, from~\cite{Grilli.2017},}
    \dot x_k &=x_k\sum_{j,l=1}^NP_{kjl}x_jx_l   \label{eq:grilli_lit}
\end{align}
 for ecosystems of multiple species exhibiting higher order interactions. Compared to~\eqref{eq:rep_lit} they do not contain the average fitness term (the second term in the parentheses). 
 Hence, there are no internal relations between the coupling coefficients. 
 It can readily be seen that~\eqref{eq:gibbs_lit} can realize the Guckenheimer--Holmes system~\eqref{eq:gh_cubic} while \eqref{eq:grilli_lit} cannot, since there are no linear terms in the equations of motion. 
 The coupling topology of both systems is that of a directed hypergraph with nodespecific coupling.
 The coupling functions are restricted to cubic polynomials. 

 Nonetheless, the Field cycle can be realized. 
 In fact it was shown in~\cite{Aguiar.2011} that the Field cycle exists in cubic systems. 
 For~\eqref{eq:grilli_lit} additional investigations have to be made to give a definite answer as to whether its realization is possible in a system with cubic terms only. 
 Finally, note that~\cite{Levine.2017} describes periodic fluctuations of species abundances in a model similar to~\eqref{eq:gibbs_lit} and~\eqref{eq:grilli_lit}. 
 This dynamical phenomenon can be organized by heteroclinic cycles.

\section{Discussion}
\label{sec:Discussion}
\noindent
The class of hypergraphs considered restricts what heteroclinic cycles can be realized in network dynamical systems with higher-order interactions. Specifically, we see that the vector fields one obtains through an underlying class of undirected hypergraphs (under the assumption that the coupling functions are compatible with the combinatorial structure) typically does not allow to realize the heteroclinic cycles considered here ({see~\Cref{Sec:Undirected,sec:LargerN}}). A key issue is that an undirected hypergraph imposes symmetries on the resulting vector field: Each hyperedge is a set of vertices which means that each node in the edge is affected by each other node in the same way (indecently of how they are ordered). The emergence of heteroclinic cycles is prevented by the symmetries, for example, by forcing the transverse linear stability in all directions to be the same or by allowing to get the required linearization of one equilibrium in the cycle but not all simultaneously.

Dynamics on directed (hyper)graphs provide more flexibility to realize heteroclinic cycles. 
{Such dynamical systems naturally arise as phase dynamics derived through phase reduction~\cite{Bick2023}.
While distinct frameworks for dynamics on directed hypergraphs have recently been developed~\cite{Aguiar2020,Gallo2022,vonderGracht.2023,vonderGracht.2023e}, the dynamical focus has predominantly been on the local dynamics around synchronized solutions.
At the same time, even though not stated explicitly, directed higher-order interactions do facilitate the emergence of heteroclinic structures in phase oscillator networks~\cite{Bick2017c,Bick2018}.
}%
Thus, while dynamics on directed hypergraphs have only received limited attention, 
we expect this to be the natural class of network dynamical systems that capture global dynamical phenomena in real world processes.

Whether a given heteroclinic cycle can be realized does not only depend on the underlying class of hypergraphs but also on the choice of interaction functions. As discussed in \Cref{sec:HOI}, certain model equations come with a natural choice of interaction function, such as ``Laplacian-like'' coupling in~\cite{Salova2021a}. Whether it is possible to realize a given heteroclinic cycle in the class of coupling functions is possible depends on the particular class; an analysis of this for each model in \Cref{sec:HOI} is beyond the scope of this article.

{Here we discussed realizing two key examples of heteroclinic cycles in network dynamical systems on hypergraphs.}
In particular, the Field cycle is an example of a more general construction to realize heteroclinic structures in dynamical systems. The analysis was restricted predominantly to the \emph{existence} of the heteroclinic structures in phase space (apart from some comments on the Guckenheimer--Holmes cycle).
Additional features of heteroclinic structures, such as (dynamical) stability, the number of connecting heteroclinic trajectories, or whether the stable manifolds of the equilibria contain all their unstable manifolds will necessarily lead to further conditions on the network structure and coupling functions. 
{While our focus was primarily on dynamics on hypergraphs on three vertices, we also briefly discussed our results in the context of hypergraphs with a larger number of nodes in \Cref{sec:LargerN}.
Understanding heteroclinic phenomena---including the general construction that the Field cycle is an example of---for network dynamics on hypergraphs in more detail yields opportunities for further research.
}

\section*{Acknowledgements}
\noindent
CB acknowledges support from the Engineering and Physical Sciences Research Council (EPSRC) through the grant EP/T013613/1.
SvdG was partially funded by the Deutsche Forschungsgemeinschaft (DFG, German Research Foundation)---453112019.

\printbibliography

\appendix
\section{The Guckenheimer--Holmes Cycle in Directed \texorpdfstring{$3$}{3}-Uniform Hypergraphs}
\label{sec:app-gh-3-uniform}
\newcommand{\e}{\Pi}
\newcommand{\ee}{\Phi}
\newcommand{\eee}{\Psi}
\newcommand{\Rnum}[1]{\MakeUppercase{\romannumeral #1}}
\noindent
In this appendix, we fill the gaps in the considerations on realizing the Gucken\-heimer--Holmes system \eqref{eq:gh_cubic} as a network dynamical system on directed $3$-uniform hypergraphs by completing the proof of \Cref{thr:gh-3-uniform}. 
\begin{proof}[Proof of \Cref{thr:gh-3-uniform}, continued]
    The expression in \eqref{eq:gh-directed-uniform} has to be equal to the governing function $f(z,y_1,y_2)=z+az^3+bzy_1^2+czy_2^2$ from \eqref{eq:gh_cubic}. That is, we have
    \begin{equation}
        \label{eq:app-gh-param}
        \begin{split}
            1 &= \alpha_1 \\
            a &= \alpha_2 + \beta (\ee+\eee) \\
            b &= \beta (\e+\ee) \\
            c &= \beta (\e+\eee)
        \end{split}
    \end{equation}
    and $P\equiv 0, P' \equiv 0$. In fact, it suffices to choose $P, P'$ such that $Q\equiv0$, however, the more restrictive assumption does not change the argumentation. In the remainder, we abbreviate $\alpha=\alpha_2$. For the Guckenheimer--Holmes cycle to emerge in the system, the parameters need to satisfy the following inequalities.
    \begin{align}
        & a+b+c = -1 \label{eq:app-gh-1} \\
        & -\frac{1}{3} < a < 0 \label{eq:app-gh-2} \\
        & c < a < b < 0 \label{eq:app-gh-3}.
    \end{align}
    As a first observation, \eqref{eq:app-gh-3} implies that $\e, \ee, \eee$ cannot vanish simultaneously. This fact is frequently used in the upcoming transformations without mention. Substitute $a,b,c$ from \eqref{eq:app-gh-param} into \eqref{eq:app-gh-1} to obtain
    \[ \beta = - \frac{\alpha+1}{2(\e+\ee+\eee)}. \]
    Substituting this back into \eqref{eq:app-gh-param}, we obtain
    \begin{equation*}
        \begin{split}
            a &= \alpha - \frac{(\alpha+1)(\ee+\eee)}{2(\e+\ee+\eee)}  \\
            b &= - \frac{(\alpha+1)(\ee+\eee)}{2(\e+\e+\ee)} \\
            c &= - \frac{(\alpha+1)(\ee+\eee)}{2(\e+\e+\eee)}
        \end{split}
    \end{equation*}
    
    We now substitute these expressions into in the inequalities to obtain four inequalities for $\alpha$ depending on $\e, \ee, \eee$. First, substitute $a$ into \eqref{eq:app-gh-2}. Omitting any details, this can equivalently be transformed into
    \begin{equation*}
        \frac{-2\e+\ee+\eee}{6\e+3\ee+3\eee} < \alpha < \frac{3\ee+3\eee}{6\e+3\ee+3\eee}.
    \end{equation*}
    We refer to these two inequalities as \Rnum{1} and \Rnum{2}. Substituting $a,b,c$ into \eqref{eq:app-gh-3}, the first two inequalities can equivalently be transformed into
    \begin{equation*}
        -\frac{\e-\ee}{3\e+\ee+2\eee} < \alpha < -\frac{\e-\eee}{3\e+2\ee+\eee},
    \end{equation*}
    which we refer to as inequalities \Rnum{3} and \Rnum{4}. The third inequality in \eqref{eq:app-gh-3} is equivalent to $\alpha>-1$ if $\e+\ee\ne0$, which we see to be true below. This inequality is automatically satisfied whenever \Rnum{1} is satisfied. These considerations show that the cubic Guckenheimer--Holmes system \eqref{eq:gh_cubic} can be realized if and only if the hypergraph is such that $\e,\ee,\eee$ are independent of the targeted vertex and the inequalities \Rnum{1}--\Rnum{4} can be satisfied simultaneously.
    
    In the next step, we investigate, which configurations of $\e, \ee, \eee$ are possible, if these conditions are satisfied. The first immediate observations are that the third inequality in \eqref{eq:app-gh-3}---$b<0$---can only be satisfied if
    \begin{equation}
        \label{eq:app-gh-cond1}
        \e+\ee\ne0.
    \end{equation}
    and that \Rnum{3} and \Rnum{4} can only both be satisfied if
    \begin{equation}
        \label{eq:app-gh-cond2}
        \ee<\eee.
    \end{equation}
    All hyperedges in a $3$-uniform hypergraph on three vertices stem from the set
    \begin{equation*}
        \left\{
            \begin{array}{lll}
                \he{1,2}{1}, & \he{1,2}{1,2}, & \he{1,2}{1,2,3} \\
                \he{1,3}{1}, & \he{1,3}{1,2}, & \he{1,3}{1,2,3} \\
                \he{2,3}{1}, & \he{2,3}{1,2}, & \he{2,3}{1,2,3} \\
                \he{1,2}{2}, & \he{1,2}{1,3} & \\
                \he{1,3}{2}, & \he{1,3}{1,3} & \\
                \he{2,3}{2}, & \he{2,3}{1,3} & \\
                \he{1,2}{3}, & \he{1,2}{2,3} & \\
                \he{1,3}{3}, & \he{1,3}{2,3} & \\
                \he{2,3}{3}, & \he{2,3}{2,3} &
            \end{array}
        \right\}.
    \end{equation*}
    We define subsets $\omega^0_k$ for $k=1,2,3$ of hyperedges that are a true $2$-to-$1$ input for vertex $k$:
    \begin{align*}
        \omega^0_1 & = \{ \he{2,3}{1}, \he{2,3}{1,2}, \he{2,3}{1,3}, \he{2,3}{1,2,3} \} \\
        \omega^0_2 &= \{ \he{1,3}{2}, \he{1,3}{1,2}, \he{1,3}{2,3}, \he{1,3}{1,2,3} \} \\
        \omega^0_3 &= \{ \he{1,2}{3}, \he{1,2}{1,3}, \he{1,2}{2,3}, \he{1,2}{1,2,3} \}.
    \end{align*}
    That is, $E_0\subset \omega^0_k$ for vertex $k$. Similarly, we define
    \begin{align*}
        \omega^1_1 & = \{ \he{1,2}{1}, \he{1,2}{1,2}, \he{1,2}{1,3}, \he{1,2}{1,2,3} \} \\
        \omega^1_2 &= \{ \he{2,3}{2}, \he{2,3}{1,2}, \he{2,3}{2,3}, \he{2,3}{1,2,3} \} \\
        \omega^1_3 &= \{ \he{1,3}{3}, \he{1,3}{1,3}, \he{1,3}{2,3}, \he{1,3}{1,2,3} \}
    \end{align*}
    to be the degenerate inputs from the right neighbor, i.e., $E_1 \subset \omega^1_k$ for vertex $k$, and
    \begin{align*}
        \omega^2_1 & = \{ \he{1,3}{1}, \he{1,3}{1,2}, \he{1,3}{1,3}, \he{1,3}{1,2,3} \} \\
        \omega^2_2 &= \{ \he{1,2}{2}, \he{1,2}{1,2}, \he{1,2}{2,3}, \he{1,2}{1,2,3} \} \\
        \omega^2_3 &= \{ \he{2,3}{3}, \he{2,3}{1,3}, \he{2,3}{2,3}, \he{2,3}{1,2,3} \}
    \end{align*}
    to be the degenerate inputs from the left neighbor, i.e., $E_2 \subset \omega^2_k$ for vertex $k$. We immediately observe
    \begin{equation}
        \label{eq:app-gh-cond3}
        |E_i| \le 4 \quad \text{for}\quad i=0,1,2.
    \end{equation}
    As these subsets intersect non-trivially, we cannot arbitrarily distribute hyperedges to generate arbitrary combinations of $\e,\ee,\eee$. In fact, we observe that the true $2$-to-$1$ connections are all in exactly one of those sets, the $2$-to-$2$ connections are in $\omega_k^i\cap\omega_l^j$ for some $k\ne l$ and $i\ne j$, and the $2$-to-$3$ connections are in $\omega_1^i\cap\omega_2^j\cap\omega_3^s$ for $\{i,j,s\}=\{0,1,2\}$.
    
    Since $\e,\ee,\eee$ are necessarily independent of the vertex $k$, it is handy to investigate the sets $\Omega_i = \omega_1^i\cup\omega_2^i\cup\omega_3^i$ for $i=0,1,2$ to deduce the restrictions. In fact, these sets have the following relations
    \begin{align*}
        \Omega_0\cap\Omega_1 &= 
        \left\{
            \begin{array}{ll}
                \he{1,2}{1,3}, & \he{1,2}{1,2,3}, \\
                \he{2,3}{1,2}, & \he{2,3}{1,2,3}, \\
                \he{1,3}{2,3}, & \he{1,3}{1,2,3}
            \end{array}
        \right\}, \\[5pt]
        \Omega_0\setminus\Omega_1 &= 
        \left\{
            \begin{array}{ll}
                \he{2,3}{1}, & \he{2,3}{1,3}, \\
                \he{1,3}{2}, & \he{1,3}{1,2}, \\
                \he{1,2}{3}, & \he{1,2}{2,3}
            \end{array}
        \right\}, \\[5pt]
        \Omega_1\setminus\Omega_0 &= 
        \left\{
            \begin{array}{ll}
                \he{1,2}{1}, & \he{1,2}{1,2}, \\
                \he{2,3}{2}, & \he{2,3}{2,3}, \\
                \he{1,3}{3}, & \he{1,3}{1,3}
            \end{array}
        \right\}, \\[5pt]
        \Omega_0\cap\Omega_2 &= 
        \left\{
            \begin{array}{ll}
                \he{1,3}{1,2}, & \he{1,3}{1,2,3}, \\
                \he{1,2}{2,3}, & \he{1,2}{1,2,3}, \\
                \he{2,3}{1,3}, & \he{2,3}{1,2,3}
            \end{array}
        \right\}, \\[5pt]
        \Omega_0\setminus\Omega_2 &= 
        \left\{
            \begin{array}{ll}
                \he{2,3}{1}, & \he{2,3}{1,2}, \\
                \he{1,3}{2}, & \he{1,3}{2,3}, \\
                \he{1,2}{3}, & \he{1,2}{1,3}
            \end{array}
        \right\}, \\[5pt]
        \Omega_2\setminus\Omega_0 &= 
        \left\{
            \begin{array}{ll}
                \he{1,3}{1}, & \he{1,3}{1,3}, \\
                \he{1,2}{2}, & \he{1,2}{1,2}, \\
                \he{2,3}{3}, & \he{2,3}{2,3}
            \end{array}
        \right\}, \\[5pt]
        \Omega_1\cap\Omega_2 &= 
        \left\{
            \begin{array}{ll}
                \he{1,3}{1,3}, & \he{1,3}{1,2,3}, \\
                \he{1,2}{1,2}, & \he{1,2}{1,2,3}, \\
                \he{2,3}{2,3}, & \he{2,3}{1,2,3}
            \end{array}
        \right\}, \\[5pt]
        \Omega_1\setminus\Omega_2 &= 
        \left\{
            \begin{array}{ll}
                \he{1,2}{1}, & \he{1,2}{1,3}, \\
                \he{2,3}{2}, & \he{2,3}{1,2}, \\
                \he{1,3}{3}, & \he{1,3}{2,3}
            \end{array}
        \right\}, \\[5pt]
        \Omega_2\setminus\Omega_1 &= 
        \left\{
            \begin{array}{ll}
                \he{1,3}{1}, & \he{1,3}{1,2}, \\
                \he{1,2}{2}, & \he{1,2}{2,3}, \\
                \he{2,3}{3}, & \he{2,3}{1,3}
            \end{array}
        \right\}
    \end{align*}
    Using these sets, we can deduce restrictions on $\e,\ee,\eee$ by the following combinatorical considerations:
    \begin{itemize}
        \item If $|E_i|=2$ the corresponding set $\Omega_i$ needs to contain elements besides the $2$-to-$1$ connections. Thus, $\Omega_i\cap\Omega_j \ne\emptyset$ for some $j\ne i$. This implies $|E_j|\ge1$. To summarize
        \begin{equation}
            \label{eq:app-gh-cond4}
            |E_i|=2 \implies |E_j|\ge1 \text{ for some } j\ne i.
        \end{equation}
        \item A similar argument shows that whenever $|E_i|\ge3$ there are elements in both intersections $\Omega_i\cap\Omega_j$ and $\Omega_i\cap\Omega_s$ for $\{i,j,s\}=\{0,1,2\}$, which are therefore non-empty. This can be summarized as
        \begin{equation*}
            |E_i|\ge3 \implies |E_j|\ge1 \text{ for all } j\ne i.
        \end{equation*}
        \item Similarly, whenever $|E_i|=4$ all elements in both intersections $\Omega_i\cap\Omega_j$ and $\Omega_i\cap\Omega_k$ for $\{i,j,s\}=\{0,1,2\}$ are present. Hence,
        \begin{equation*}
            |E_i|=4 \implies |E_j|\ge2 \text{ for all } j\ne i.
        \end{equation*}
        \item The previous two restrictions can further be summarized as
        \begin{equation}
            \label{eq:app-gh-cond5}
            \left| |E_i| - |E_j| \right| \le 2 \text{ for all } i,j.
        \end{equation}
        \item Whenever $\e=\eee=4$ all $2$-to-$2$ and all $2$-to-$3$ connections must be present. These are all elements in $\Omega_0\cap\Omega_1$ and $\Omega_1\cap\Omega_2$. In particular, we have
        \begin{equation}
            \label{eq:app-gh-cond6}
            \e=\eee=4 \implies \ee\ge3.
        \end{equation}
    \end{itemize}
    
    There are exactly $20$ combinations of $\e,\ee,\eee$ that satisfy the conditions in \Cref{eq:app-gh-cond1,eq:app-gh-cond2,eq:app-gh-cond3,eq:app-gh-cond4,eq:app-gh-cond5,eq:app-gh-cond6}. In fact, all of these configurations can be realized by a $3$-uniform hypergraph. Furthermore, inequalities \Rnum{1}--\Rnum{4} can be satisfied for each of these configurations. Thus, the Guckenheimer--Holmes system \eqref{eq:gh_cubic} can be realized. We list all possible configurations and inequalities together with an example of a suitable hypergraph in \Cref{tab:ap-gh-uniform} below. In particular, the cubic Guckenheimer--Holmes system with admissible parameters is realized for any of those hypergraphs if $\alpha$ is chosen to satisfy the inequalities in the second column and $\beta= - \frac{\alpha+1}{2(\e+\ee+\eee)}$ and as a result $f$ and $\go{3}$ are as presented above. Note that there can be examples of a hypergraph realizing a particular configuration of $\e, \ee, \eee$ besides the one that is listed in the table. 
\end{proof}

\renewcommand*{\arraystretch}{1.4}
\begin{longtable}[c]{c|c|l}
    \caption{List of all configurations of $\e, \ee, \eee$ that allow for the realization of the cubic Guckenheimer--Holmes system \eqref{eq:gh_cubic} as a network dynamical system on a $3$-uniform hypergraph. The second column specifies the range in which the parameter $\alpha$ satisfies inequalities \Rnum{1}--\Rnum{4}. The third column lists one example of a $3$-uniform hypergraph that realizes the given configuration of $\e,\ee,\eee$.\label{tab:ap-gh-uniform}}\\
    \hline
    \multicolumn{3}{| c |}{Begin of Table}\\
    \hline
    Parameters & Inequalities \Rnum{1}--\Rnum{4} & Hyperedges \\
    \hline
    \endfirsthead

    \hline
    \multicolumn{3}{|c|}{Continuation of \Cref{tab:ap-gh-uniform}}\\
    \hline
    Parameters & Inequalities & Hyperedges \\
    \hline
    \endhead

    \hline
    \endfoot

    \hline
    \multicolumn{3}{| c |}{End of Table}\\
    \hline\hline
    \endlastfoot

            $\begin{array}{l} \e=0 \\ \ee=1 \\ \eee=2\end{array}$
        &
            $\begin{array}{c} \frac{1}{5}<\alpha<\frac{1}{2} \\[2pt] \frac{1}{3}<\alpha<1 \end{array}$
        &
            $\begin{array}{ll}
                \he{1,3}{1}, & \he{1,3}{1,3}, \\
                \he{1,2}{2}, & \he{1,2}{1,2}, \\
                \he{2,3}{3}, & \he{2,3}{2,3}
            \end{array}$
        \\[5pt] \hline

            $\begin{array}{l} \e=1 \\ \ee=0 \\ \eee=1\end{array}$
        &
            $\begin{array}{c} -\frac{1}{5}<\alpha<0 \\[2pt] -\frac{1}{9}<\alpha<\frac{1}{3} \end{array}$
        &
            $\begin{array}{ll}
                \he{2,3}{1}, & \he{1,3}{1}, \\
                \he{1,3}{2}, & \he{1,2}{1}, \\
                \he{1,2}{3}, & \he{2,3}{2}
            \end{array}$
        \\[5pt] \hline

            $\begin{array}{l} \e=1 \\ \ee=0 \\ \eee=2\end{array}$
        &
            $\begin{array}{c} -\frac{1}{7}<\alpha<\frac{1}{5} \\[2pt] 0<\alpha<\frac{1}{2} \end{array}$
        &
            $\begin{array}{ll}
                \he{1,3}{1}, & \he{1,3}{1,2}, \\
                \he{1,2}{2}, & \he{1,2}{2,3}, \\
                \he{2,3}{3}, & \he{2,3}{1,3}
            \end{array}$
        \\[5pt] \hline

            $\begin{array}{l} \e=1 \\ \ee=1 \\ \eee=2\end{array}$
        &
            $\begin{array}{c} 0<\alpha<\frac{1}{7} \\[2pt] \frac{1}{15}<\alpha<\frac{3}{5} \end{array}$
        &
            $\begin{array}{ll}
                \he{1,3}{1}, & \he{1,3}{1,2}, \\
                \he{1,2}{2}, & \he{1,2}{2,3}, \\
                \he{2,3}{3}, & \he{2,3}{1,3}, \\
                \he{1,2}{1}, & \\
                \he{2,3}{2}, & \\
                \he{1,3}{3}  & 
            \end{array}$
        \\[5pt] \hline

            $\begin{array}{l} \e=1 \\ \ee=1 \\ \eee=3\end{array}$
        &
            $\begin{array}{c} 0<\alpha<\frac{1}{4} \\[2pt] \frac{1}{9}<\alpha<\frac{2}{3} \end{array}$
        &
            $\begin{array}{ll}
                \he{1,3}{1}, & \he{1,3}{1,2}, \\
                \he{1,2}{2}, & \he{1,2}{2,3}, \\
                \he{2,3}{3}, & \he{2,3}{1,3}, \\
                 & \he{1,3}{1,3}, \\
                 & \he{1,2}{1,2}, \\
                 & \he{2,3}{2,3}
            \end{array}$
        \\[5pt] \hline

            $\begin{array}{l} \e=1 \\ \ee=2 \\ \eee=3\end{array}$
        &
            $\begin{array}{c} \frac{1}{11}<\alpha<\frac{1}{5} \\[2pt] \frac{1}{7}<\alpha<\frac{5}{7} \end{array}$
        &
            $\begin{array}{ll}
                \he{1,3}{1}, & \he{1,3}{1,2}, \\
                \he{1,2}{2}, & \he{1,2}{2,3}, \\
                \he{2,3}{3}, & \he{2,3}{1,3}, \\
                \he{1,2}{1}, & \he{1,3}{1,3}, \\
                \he{2,3}{2}, & \he{1,2}{1,2}, \\
                \he{1,3}{3}, & \he{2,3}{2,3}
            \end{array}$
        \\[5pt] \hline

            $\begin{array}{l} \e=2 \\ \ee=0 \\ \eee=1\end{array}$
        &
            $\begin{array}{c} -\frac{1}{4}<\alpha<-\frac{1}{7} \\[2pt] -\frac{1}{5}<\alpha<\frac{1}{5} \end{array}$
        &
            $\begin{array}{ll}
                \he{2,3}{1}, & \he{1,3}{1,2}, \\
                \he{1,3}{2}, & \he{1,2}{2,3}, \\
                \he{1,2}{3}, & \he{2,3}{1,3}
            \end{array}$
        \\[5pt] \hline

            $\begin{array}{l} \e=2 \\ \ee=0 \\ \eee=2\end{array}$
        &
            $\begin{array}{c} -\frac{1}{5}<\alpha<0 \\[2pt] -\frac{1}{9}<\alpha<\frac{1}{3} \end{array}$
        &
            $\begin{array}{ll}
                \he{2,3}{1}, & \he{1,3}{1,2}, \\
                \he{1,3}{2}, & \he{1,2}{2,3}, \\
                \he{1,2}{3}, & \he{2,3}{1,3}, \\
                \he{1,3}{1}, & \\
                \he{1,2}{2}, & \\
                \he{2,3}{3}
            \end{array}$
        \\[5pt] \hline

            $\begin{array}{l} \e=2 \\ \ee=1 \\ \eee=2\end{array}$
        &
            $\begin{array}{c} -\frac{1}{11}<\alpha<0 \\[2pt] -\frac{1}{21}<\alpha<\frac{3}{7} \end{array}$
        &
            $\begin{array}{ll}
                \he{2,3}{1}, & \he{1,3}{1,2}, \\
                \he{1,3}{2}, & \he{1,2}{2,3}, \\
                \he{1,2}{3}, & \he{2,3}{1,3}, \\
                \he{1,3}{1}, & \\
                \he{1,2}{2}, & \\
                \he{2,3}{3}, & \\
                \he{1,2}{1}, & \\
                \he{2,3}{2}, & \\
                \he{1,3}{3}
            \end{array}$
        \\[5pt] \hline

            $\begin{array}{l} \e=2 \\ \ee=1 \\ \eee=3\end{array}$
        &
            $\begin{array}{c} -\frac{1}{13}<\alpha<\frac{1}{11} \\[2pt] 0<\alpha<\frac{1}{2} \end{array}$
        &
            $\begin{array}{ll}
                \he{2,3}{1}, & \he{1,3}{1,2}, \\
                \he{1,3}{2}, & \he{1,2}{2,3}, \\
                \he{1,2}{3}, & \he{2,3}{1,3}, \\
                \he{1,3}{1}, & \he{1,3}{1,3}, \\
                \he{1,2}{2}, & \he{1,2}{1,2}, \\
                \he{2,3}{3}, & \he{2,3}{2,3}
            \end{array}$
        \\[5pt] \hline

            $\begin{array}{l} \e=2 \\ \ee=2 \\ \eee=3\end{array}$
        &
            $\begin{array}{c} 0<\alpha<\frac{1}{13} \\[2pt] \frac{1}{27}<\alpha<\frac{5}{9} \end{array}$
        &
            $\begin{array}{ll}
                \he{2,3}{1}, & \he{1,3}{1,2}, \\
                \he{1,3}{2}, & \he{1,2}{2,3}, \\
                \he{1,2}{3}, & \he{2,3}{1,3}, \\
                \he{1,3}{1}, & \he{1,3}{1,3}, \\
                \he{1,2}{2}, & \he{1,2}{1,2}, \\
                \he{2,3}{3}, & \he{2,3}{2,3}, \\
                \he{1,2}{1}, & \\
                \he{2,3}{2}, & \\
                \he{1,3}{3}
            \end{array}$
        \\[5pt] \hline

            $\begin{array}{l} \e=2 \\ \ee=2 \\ \eee=4\end{array}$
        &
            $\begin{array}{c} 0<\alpha<\frac{1}{7} \\[2pt] \frac{1}{15}<\alpha<\frac{3}{5} \end{array}$
        &
            $\begin{array}{lll}
                \he{1,3}{1}, & \he{1,3}{1,3}, & \he{1,3}{1,2,3}, \\
                \he{1,2}{2}, & \he{1,2}{1,2}, & \he{1,2}{1,2,3}, \\
                \he{2,3}{3}, & \he{2,3}{2,3}, & \he{2,3}{1,2,3}, \\
                 & \he{1,3}{1,2}, & \\
                 & \he{1,2}{2,3}, & \\
                 & \he{2,3}{1,3}
            \end{array}$
        \\[5pt] \hline

            $\begin{array}{l} \e=2 \\ \ee=3 \\ \eee=4\end{array}$
        &
            $\begin{array}{c} \frac{1}{17}<\alpha<\frac{1}{8} \\[2pt] \frac{1}{11}<\alpha<\frac{7}{11} \end{array}$
        &
            $\begin{array}{lll}
                \he{1,3}{1}, & \he{1,3}{1,3}, & \he{1,3}{1,2,3}, \\
                \he{1,2}{2}, & \he{1,2}{1,2}, & \he{1,2}{1,2,3}, \\
                \he{2,3}{3}, & \he{2,3}{2,3}, & \he{2,3}{1,2,3}, \\
                \he{1,2}{1}, & \he{1,3}{1,2}, & \\
                \he{2,3}{2}, & \he{1,2}{2,3}, & \\
                \he{1,3}{3}, & \he{2,3}{1,3}
            \end{array}$
        \\[5pt] \hline

            $\begin{array}{l} \e=3 \\ \ee=1 \\ \eee=2\end{array}$
        &
            $\begin{array}{c} -\frac{1}{7}<\alpha<-\frac{1}{13} \\[2pt] -\frac{1}{9}<\alpha<\frac{1}{3} \end{array}$
        &
            $\begin{array}{ll}
                \he{2,3}{1}, & \he{1,3}{1,2}, \\
                \he{1,3}{2}, & \he{1,2}{2,3}, \\
                \he{1,2}{3}, & \he{2,3}{1,3}, \\
                \he{1,3}{1}, & \he{1,2}{1,3}, \\
                \he{1,2}{2}, & \he{2,3}{1,2}, \\
                \he{2,3}{3}, & \he{1,3}{2,3}
            \end{array}$
        \\[5pt] \hline

            $\begin{array}{l} \e=3 \\ \ee=1 \\ \eee=3\end{array}$
        &
            $\begin{array}{c} -\frac{1}{8}<\alpha<0 \\[2pt] -\frac{1}{15}<\alpha<\frac{2}{5} \end{array}$
        &
            $\begin{array}{lll}
                \he{2,3}{1}, & \he{1,3}{1,2}, & \he{1,3}{1,2,3}, \\
                \he{1,3}{2}, & \he{1,2}{2,3}, & \he{1,2}{1,2,3}, \\
                \he{1,2}{3}, & \he{2,3}{1,3}, & \he{2,3}{1,2,3}, \\
                \he{1,3}{1}, & & \\
                \he{1,2}{2}, & & \\
                \he{2,3}{3} & &
            \end{array}$
        \\[5pt] \hline

            $\begin{array}{l} \e=3 \\ \ee=2 \\ \eee=3\end{array}$
        &
            $\begin{array}{c} -\frac{1}{17}<\alpha<0 \\[2pt] -\frac{1}{33}<\alpha<\frac{5}{11} \end{array}$
        &
            $\begin{array}{lll}
                \he{2,3}{1}, & \he{1,3}{1,2}, & \he{1,3}{1,2,3}, \\
                \he{1,3}{2}, & \he{1,2}{2,3}, & \he{1,2}{1,2,3}, \\
                \he{1,2}{3}, & \he{2,3}{1,3}, & \he{2,3}{1,2,3}, \\
                \he{1,3}{1}, & & \\
                \he{1,2}{2}, & & \\
                \he{2,3}{3}, & & \\
                \he{1,2}{1}, & & \\
                \he{2,3}{2}, & & \\
                \he{1,3}{3} & &                
            \end{array}$
        \\[5pt] \hline

            $\begin{array}{l} \e=3 \\ \ee=2 \\ \eee=4\end{array}$
        &
            $\begin{array}{c} -\frac{1}{19}<\alpha<\frac{1}{17} \\[2pt] 0<\alpha<\frac{1}{2} \end{array}$
        &
            $\begin{array}{lll}
                \he{2,3}{1}, & \he{1,3}{1,2}, & \he{1,3}{1,2,3}, \\
                \he{1,3}{2}, & \he{1,2}{2,3}, & \he{1,2}{1,2,3}, \\
                \he{1,2}{3}, & \he{2,3}{1,3}, & \he{2,3}{1,2,3}, \\
                \he{1,3}{1}, & \he{1,3}{1,3}, & \\
                \he{1,2}{2}, & \he{1,2}{1,2}, & \\
                \he{2,3}{3}, & \he{2,3}{2,3} &               
            \end{array}$
        \\[5pt] \hline

            $\begin{array}{l} \e=3 \\ \ee=3 \\ \eee=4\end{array}$
        &
            $\begin{array}{c} 0<\alpha<\frac{1}{19} \\[2pt] \frac{1}{39}<\alpha<\frac{7}{13} \end{array}$
        &
            $\begin{array}{lll}
                \he{2,3}{1}, & \he{1,3}{1,2}, & \he{1,3}{1,2,3}, \\
                \he{1,3}{2}, & \he{1,2}{2,3}, & \he{1,2}{1,2,3}, \\
                \he{1,2}{3}, & \he{2,3}{1,3}, & \he{2,3}{1,2,3}, \\
                \he{1,3}{1}, & \he{1,3}{1,3}, & \\
                \he{1,2}{2}, & \he{1,2}{1,2}, & \\
                \he{2,3}{3}, & \he{2,3}{2,3}, & \\
                \he{1,2}{1}, & & \\
                \he{2,3}{2}, & & \\
                \he{1,3}{3}
            \end{array}$
        \\[5pt] \hline

            $\begin{array}{l} \e=4 \\ \ee=2 \\ \eee=3\end{array}$
        &
            $\begin{array}{c} -\frac{1}{10}<\alpha<-\frac{1}{19} \\[2pt] -\frac{1}{13}<\alpha<\frac{5}{13} \end{array}$
        &
            $\begin{array}{lll}
                \he{2,3}{1}, & \he{1,3}{1,2}, & \he{1,3}{1,2,3}, \\
                \he{1,3}{2}, & \he{1,2}{2,3}, & \he{1,2}{1,2,3}, \\
                \he{1,2}{3}, & \he{2,3}{1,3}, & \he{2,3}{1,2,3}, \\
                \he{1,3}{1}, & \he{1,3}{2,3}, & \\
                \he{1,2}{2}, & \he{1,2}{1,3}, & \\
                \he{2,3}{3}, & \he{2,3}{1,2} & \\
            \end{array}$
        \\[5pt] \hline

            $\begin{array}{l} \e=4 \\ \ee=3 \\ \eee=4\end{array}$
        &
            $\begin{array}{c} -\frac{1}{23}<\alpha<0 \\[2pt] -\frac{1}{45}<\alpha<\frac{7}{15} \end{array}$
        &
            $\begin{array}{lll}
                \he{2,3}{1}, & \he{1,3}{1,2}, & \he{1,3}{1,2,3}, \\
                \he{1,3}{2}, & \he{1,2}{2,3}, & \he{1,2}{1,2,3}, \\
                \he{1,2}{3}, & \he{2,3}{1,3}, & \he{2,3}{1,2,3}, \\
                \he{1,3}{1}, & \he{1,3}{2,3}, & \\
                \he{1,2}{2}, & \he{1,2}{1,3}, & \\
                \he{2,3}{3}, & \he{2,3}{1,2}, & \\
                 & \he{1,3}{1,3}, & \\
                 & \he{1,2}{1,2}, & \\
                 & \he{2,3}{2,3}
            \end{array}$
\end{longtable}

\end{document}